\DeclareMathOperator\id{}
\newtheorem{thm}{Theorem}[section]
\newtheorem{lem}[thm]{Lemma}
\newtheorem{prob}[thm]{Problem}
\theoremstyle{definition}
\newtheorem{dfn}[thm]{Definition}
\theoremstyle{remark}
\newtheorem{rmk}[thm]{Remark}
\newtheorem{example}[thm]{Example}
\numberwithin{equation}{section}
\numberwithin{figure}{section}
\newcommand{\overunder}[2]{ 
\!\begin{array}{c} 
\scriptstyle{#1}\\[-.1in] 
-\!\!\!-\!\!\!-\\[-.1in] 
\scriptstyle{#2} 
\end{array} 
\! 
}
\begin{document}
\title{An introduction to supersymmetric cluster algebras}
\subjclass[2010]{17A70, 13F60}
\keywords{Cluster algebras, superalgebras}
\author{Li Li}
\address{Oakland University, Rochester, MI-48309, USA}
\email{li2345@oakland.edu}
\author{James Mixco}
\address{Department of Mathematics and Statistics, St. Louis University, St.
Louis, MO-63103, USA} 
\email{james.mixco@slu.edu}
\author{B. Ransingh}
\address{Harish-Chandra Research Institute, India}
\email{biswajitransingh@hri.res.in}
\author{Ashish K. Srivastava}
\address{Department of Mathematics and Statistics, St. Louis University, St.
Louis, MO-63103, USA} 
\email{ashish.srivastava@slu.edu}
\dedicatory{Dedicated to the loving memory of Anupam Srivastava, the budding 7 year old astronomer who left planet earth too soon}

\thanks{The work of the first author is partially supported by NSA grant H98230-16-1-0303. The work of the fourth author is partially supported by a grant from Simons Foundation (grant number 426367).}

\maketitle

\begin{abstract}
In this paper we propose the notion of cluster superalgebras which is a supersymmetric version of the classical cluster algebras introduced by Fomin and Zelevinsky. We show that the symplectic-orthogonal supergroup $SpO(2|1)$ admits a cluster superalgebra structure and as a consequence of this, we deduce that the supercommutative superalgebra generated by all the entries of a superfrieze is a subalgebra of a cluster superalgebra. We also show that the coordinate superalgebra of the super Grassmannian $G(2|0; 4|1)$ of chiral conformal superspace (that is, $(2|0)$ planes inside the superspace $\mathbb C^{4|1}$) is a quotient of a cluster superalgebra.  
\end{abstract}

\bigskip

\bigskip

\noindent 

\section{Motivation behind the notion of cluster superalgebras} 

\bigskip

\noindent The study of cluster algebras was initiated by Fomin and Zelevinsky in 2001 \cite{FZ1}. Cluster algebras are commutative rings with a set of distinguished generators called cluster variables. These algebras are different from usual algebras in the sense that they are not presented at the outset by a complete set of generators and relations. Instead, an initial seed consisting of initial cluster variables and an exchange matrix is given and then using an iterative process called mutation, the rest of the cluster variables are generated. Examples of cluster algebras include the homogeneous coordinate ring $\mathcal A=\mathbb C[SL_2]$ of the group $SL_2$ and the homogeneous coordinate ring of any Grassmannian.

The initial motivation behind study of these algebras was to provide an algebraic and combinatorial framework for Lusztig's work on canonical bases but now the study of cluster algebras has gone far beyond that initial motivation. Cluster algebras now have connections to string theory, Poisson geometry, algebraic geometry, combinatorics, representation theory and Teichmuller theory. Cluster algebras provide a unifying algebraic and combinatorial framework for a wide variety of phenomenon in above mentioned settings. Interestingly, the mutation rule proposed by Fomin and Zelevinsky came up naturally in the Seiberg-Witten duality in string theory \cite{SW}. 

One of the challenges in applying cluster algebra to quiver gauge theory is that it is difficult to characterize when every dual theory obtained by successive applications of Seiberg-like dualities has enough quadratic terms in the superpotential that, after integrating out the chiral multiplets involved, no oriented 2-cycles are left. So, one of our motivations behind the introduction of the notion of cluster superlagberas is to deal with the situation when we have oriented 2-cycles. 

To emphasize the need to be able to deal with quivers having oriented 2-cycles, let us consider an appropriate on-shell diagram representing scattering amplitudes. Note that the study of scattering amplitudes is crucial to our understanding of quantum field theory. Scattering amplitudes are complicated functions of the helicities and momenta
of the external particles. But to visually interpret them in an easier manner, one may label particles involved by $\{1, 2, \ldots, n\}$ and the interaction of particles involved could be associated with a permutation of $\{1, 2, \ldots, n\}$. One of the ways to represent  scattering amplitudes is an on-shell diagram. See Arkani-Hamed et al \cite{Nima} for more details on scattering amplitudes and on-shell diagrams. 

Consider a planar on-shell diagram $B$ given as follows: 
\begin{figure}[h]
 \centering 
\includegraphics[width=4cm]{./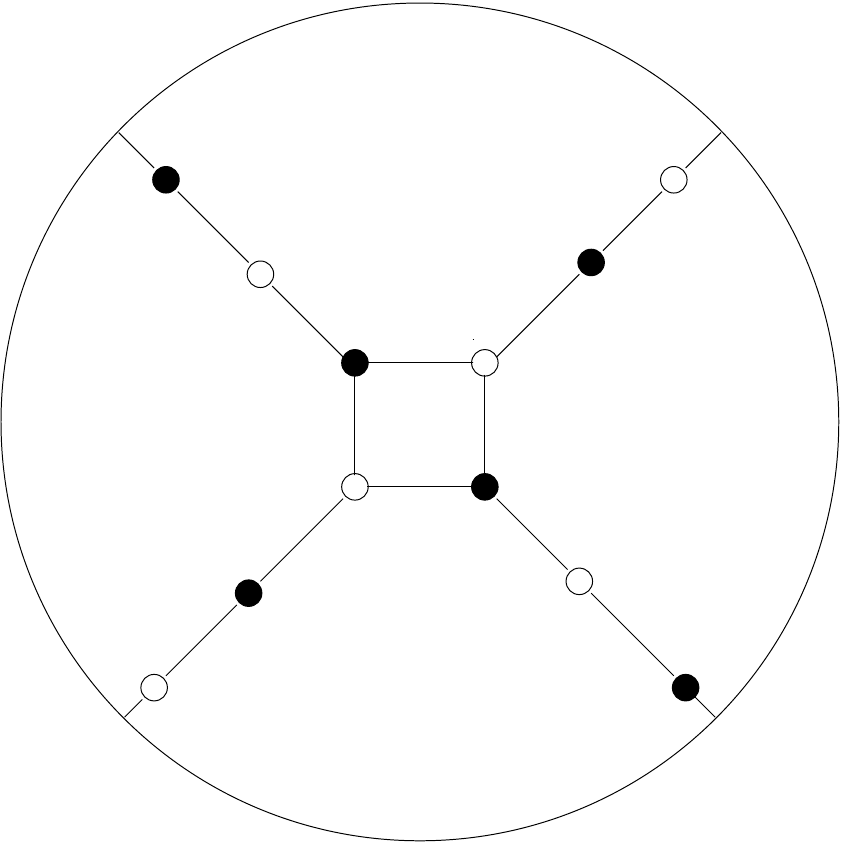}
\end{figure}

\noindent Here black and white vertices represent particles with opposite helicities involved in the scattering. We associate a quiver $Q=Q(B)$ with the bipartite graph $B$ in the following manner.
Take a vertex for each face and for each edge in the graph $B$, we draw an arrow in this quiver in such a way that it sees the white vertex in left. So we get the following quiver:

$$
\begin{array}{ccccccc}
\xymatrix{
&	& &3\ar@{<-}[rrdd]&&&\\
&	&  &&
	&&\\
Q=Q(B):&	4\ar@{<-}[rr]\ar@{->}[uurr]\ar@/^1.0pc/@{<-}[uurr]&& 1\ar@{->}[rr]\ar@{<-}[uu]\ar@{<-}[dd]&&
	  2\ar@/_1.0pc/@{<-}[uull] &&\\
&	&&&&&\\
&	&&5\ar@{<-}[lluu]\ar@{<-}[rruu]\ar@/^1.0pc/@{->}[uull]\ar@/_1.0pc/@{->}[uurr] &&&
}
\end{array}
$$
This quiver has oriented 2-cycles and so we cannot deal with this in the set up of classical cluster algebras. So we need an extension of cluster algebras to study algebras arising from such quivers.  

The study of symmetry has always been the central idea in mathematics and physics. In the theory of spin, we deal with rotational symmetry. Poincare symmetry is crucial in understanding of the classification of elementary particles. Similarly, permutation symmetry plays an important role in dealing with the systems of identical particles. But when one deals with both the bosons and the fermions, then to formulate the symmetry arising in this situation, the ordinary Lie group theory is insufficient. Superalgebras were introduced by physicists to provide an algebraic framework for describing boson-fermion symmetry. The boson-fermion symmetry is called supersymmetry and it holds the key to a unified field theory.  See \cite{V} for more details on supersymmetry.

A super vector space $V$ is a vector space that is $\mathbb Z_2$-graded, that is, it has a decomposition $V=V_0 \oplus V_1$ with $0, 1\in \mathbb Z_2:=\mathbb Z/2\mathbb Z$. The elements of $V_0$ are called the {\it even} (or bosonic) elements and the elements of $V_1$ are called the {\it odd} (or fermionic) elements. The elements in $V_0\cup V_1$ are called {\it homogeneous} and their {\it parity}, denoted by $p$, is defined to be $0$ or $1$ according as they are even or odd. The morphisms in the category of super vector spaces are linear maps which preserve the gradings.      

A superalgebra $A$ is an associative algebra with an identity element (which is necessarily an even element) such that the multiplication map $A\otimes A\rightarrow A$ is a morphism in the category of super vector spaces. This is the same as requiring $p(ab)=p(a)+p(b)$ for any two homogeneous elements $a$ and $b$ in $A$. A superalgebra $A$ is called {\it supercommutative} if $ab=(-1)^{p(a)p(b)}ba$, for all (homogeneous) $a, b\in A$. This means in a supercommutative superalgebra, odd elements anticommute with each other, that is, $ab=-ba$ for any two odd elements $a, b\in A$, whereas even elements commute with any other element (even or odd). We refer the reader to \cite{Kac} and \cite{Manin} for further details on superalgebras.

Recently, Ovsienko \cite{Ovsienko} and Ovsienko-Shapiro \cite{OS} have made an inspiring attempt to define cluster superalgebras, which includes several important examples such as the supergroup $OSp(1|2)$, superfriezes, the extended Somos-4 sequence, etc. In these papers, they consider an extension of a quiver by adding odd vertices and make it an oriented hypergraph. Note that in an oriented hypergraph, an arrow can connect any number of vertices. The main limitation of their approach is the lack of exchange relations for the odd variables. Since they do not define a notion of odd mutation, it is quite clear that such a notion cannot provide natural geometric examples of cluster superalgebras like the coordinate superalgebra of super Grassmannian. This is a major setback to any proposed notion of cluster superalgebras because one expects that in a meaningful notion of cluster superalgebras, the supersymmetric analogues of examples of cluster algebras should turn out to be examples of cluster superalgebras. 

The objective behind this paper is to propose a notion of cluster superalgebras which is a natural supersymmetric analogue of classical cluster algebras and provides some interesting geometric examples. In particular, we show that the coordinate superalgebra of the super Grassmannian $G(2|0; 4|1)$ of $(2|0)$ planes inside the superspace $\mathbb C^{4|1}$ is a quotient of a cluster superalgebra. This super Grassmannian $G(2|0; 4|1)$ is called the chiral conformal superspace. The elements of the coordinate superalgebra of super Grassmannian $G(2|0; 4|1)$ are identified with chiral superfields. It may be noted here that chiral superfields appear naturally in supersymmetric theories and so we expect that our results will have some far reaching consequences. We also show that the supersymmetric analogue of the group $SL_2$, the symplectic-orthogonal supergroup $SpO(2|1)$ admits a cluster superalgebra structure. Surprisingly, although our defintion of cluster superalgebra is quite different from that of \cite{Ovsienko}, we are able to prove some results similar to \cite{Ovsienko} in our setting too. For example, we are able to show that the supercommutative superalgebra generated by all the entries of a superfrieze is a subalgebra of a cluster superalgebra which is the main result of \cite{Ovsienko} and \cite{OS}.   

\bigskip

\section{Recollections from cluster algebras}
 
\bigskip

\noindent In this section we recall the basics of cluster algebras of geometric type. We use the notations as in forthcoming textbook ``Introduction to Cluster Algebras'' by Fomin, Williams, and Zelevinsky \cite[\S3.1]{FWZ}; the readers are refered to \cite{FZ1}, \cite{FZ2}, \cite{FZ3} and \cite{FZ4} for more details on cluster algebras.

Let $r, s$ be positive integers with $r\le s$. We set our ambient field $\mathcal F$ to be the field of rational functions over $\mathbb{Q}$ in $s$ independent variables. Let $\mathbb{T}$ be the $r$-regular tree whose edges are labeled by integers in $\{1,\dots,r\}$ and edge edge is incident on $r$ edges with distinct labels. The notation $t \overunder{k}{} t'$ means that the edge joining $t$ and $t'$ is labeled by $k$.
 Each vertex $t\in\mathbb{T}$ is attached with a seed
seed is pair $(\tilde{X}(t),\tilde{B}(t))$ where $\tilde{X}(t)=(x_1(t),\dots,x_s(t))$ such that $x_1(t),\dots,x_s(t)$ is a free generating set
of $\mathcal F$, and $\tilde{B}(t)$ is an $s\times r$ extended skew-symmetrizable integer matrix, in the sense that the submatrix $B(t)$ of $\tilde{B}(t)$ formed by the top $r$ rows is skew-symmetrizable (a square matrix $B$ is called skew-symmetrizable if there exists a diagonal positive integer matrix $D$ such that $DB$ is skew-symmetric). Following the convention in \cite[\S3.1]{FWZ}, we call $\tilde{X}(t)$ the extended cluster of the seed, whose first $r$ elements $x_1(t),\dots,x_r(t)$ are called the cluster variables of the seed and the remaining $s-r$ elements $x_{r+1}(t),\dots,x_s(t)$ are called the frozen variables. (For $i=r+1,\dots,s$, the frozen variable $x_{i}(t)$ does not depend on $t$, so we can simply denote it by $x_i$.)

Given a seed $(\tilde{X}(t)=\{x_1(t), \ldots, x_s(t)\}, \tilde{B}(t)=[b_{ij}^{(t)}])$ at vertex $t$ and an edge  $t \overunder{k}{} t'$ ($1\le k\le r$), the mutation $\mu_k$ gives a seed $(\tilde{X}(t')=\{x_1(t'), \ldots, x_s(t')\}, \tilde{B}(t)=[b_{ij}^{(t')}])$ at vertex $t'$,  determined by
$$x_{j}(t')=x_{j}(t),\hspace{0.5cm}j\neq k$$
\begin{equation} x_{k}(t')=\frac{1}{x_k(t)}\left[\left(\underset{b_{ik}^{(t)}>0}{\prod}x_{i}(t)^{b_{ik}^{(t)}}\underline{}\right)+
\left(\underset{b_{ik}^{(t)}<0}{\prod}x_{i}(t)^{-b_{ik}^{(t)}}\right)\right]\end{equation}
and $\tilde{B}'$ is determined by
$b_{ij}^{(t')}=-b_{ij}^{(t)}$ if $i=k$ or $j=k$, otherwise $$b_{ij}^{(t')}=b_{ij}^{(t)}+\frac{|b_{ik}^{(t)}|b_{kj}^{(t)}+b_{ik}^{(t)}|b_{kj}^{(t)}|}{2}.$$ 

Now fix an initial seed $(\tilde{X}_o=(x_1,\dots,x_s),\tilde{B}_o)$ and apply all possible finite sequences of mutations from it to get a set of seeds. Let $\mathcal X$ be the set of all cluster variables from these seeds. The cluster algebra $\mathcal A(\tilde{X}_o, \tilde{B}_o)$ is defined to be $\mathbb{Z}[x_{r+1}^\pm,\dots,x_{s}^\pm]$-subalgebra of $\mathcal F$ generated by $\mathcal X$. It is a classical result in \cite{FZ1} that all cluster variables are Laurent polynomials with variables $x_1,\dots,x_r$ and coefficients in $\mathbb{Z}[x_{r+1}^\pm,\dots,x_{s}^\pm]$.

\medskip

In the special case when the top $r$ rows of $\tilde{B}$ is skew-symmetric, $\tilde{B}$ can be encoded by a quiver $Q$ which has no loops and no 2-cycles: let the vertex set of $Q$ be $\{x_1,\dots,x_s\}$;  for any entry $b_{ij}$, there are $|b_{ij}|$ arrows between vertices $x_i$ to $x_j$, all of which go from $x_i$ to $x_j$ if $b_{ij}>0$, or from $x_j$ to $x_i$ if $b_{ij}<0$. Then mutations of $\tilde{B}$ can be suitably rephrased in terms of mutations of $Q$ as follows.

 $\mu_k(Q)$ is obtained from $Q$ by keeping the same vertices and changing arrows by the following rule: 
\begin{itemize}
\item [(i)] Add an arrow $x_{i}\rightarrow x_{j}$ for each distinct path $x_{i}\rightarrow x_{k}\rightarrow x_{j}$.
\item[(ii)] Reverse all arrows incident at $x_{k}$.
\item[(iii)] Delete any 2-cycle produced in the process.
\end{itemize}

\bigskip

\section{Definition of cluster superalgebras}

\bigskip

\noindent In this section, we propose a definition of cluster superalgebras. It is inspired by the pioneering preprint by Ovsienko \cite{Ovsienko} where a version of cluster superalgebras is given. In Ovsienko's approach, oriented hypergraphs are used to replace quivers. Our approach still uses quivers and is, in some sense, more in the spirit of the definition of classical cluster algebras.

Fix an initial seed $(X|Y, Q)$ where $Y=\{y_{1}, \ldots, y_{n}\}$ is the set of Grassmann (or odd) variables that anticommute with each other, and $X=\{x_1, \ldots, x_m\}$ is the set of variables that commute with all variables in $X$ and $Y$. These variables $x_1, \ldots, x_m, y_{1}, \ldots, y_{n}$ are called \textit{initial variables}. Let $Q$ be a  quiver with $m+n$ vertices which
are labeled as $x_1, \ldots, x_m, y_{1}, \ldots, y_{n}$. We will call the vertices labeled with $x_1, \ldots, x_m$ as \textit{even} vertices and
the vertices labeled with $y_{1}, \ldots, y_{n}$ as \textit{odd} vertices. By abuse of notations, we sometimes also denote a non-initial-variable by $x_i$ or $y_j$  instead of $x_i(t)$ or $y_j(t)$ to simply the notation. We will remark in place where confusion may arise.

Just like the classical cluster algebras recalled in \S2, there may be two types of even (odd) variables: \textit{mutable} even (odd) variables and \textit{frozen} even (odd) variables. Mutable variables may be transformed into another variable under a mutation map, whereas the frozen variables remain unaffected under any mutation map.

Just as in the case of classical cluster algebras, we do not allow 2-cycles between two even variables or two odd variables. However, we allow 2-cycles between an even vertex and an odd vertex. We do not allow loops on even vertices but allow loops on odd vertices. We will call such a quiver a \textit{superquiver.}

Throughout this paper, if a superquiver has odd vertices we will restrict ourselves to those superquivers that satisfy at least one of the properties below. The reason behind these restrictions is to ensure Laurent phenomenon for our set up of cluster superalgebras. As we will see in \S\ref{section:limitations and extensions} that the Laurent phenomenon breaks down without these restriction.

\bigskip

\subsection{Restriction on our superquivers} 

\bigskip

\noindent In any superquiver with more than one odd vertex, we assume that, for any path of the form $y_i\to x_k\to y_j$ satisfying  ``$i\neq j$, $x_k$ is mutable, and there are no arrows from $y_i$ to $y_j$'',  \emph{at least one} of the following restrictions hold:

\medskip
{(C1)} $x_k$ is not adjacent to any mutable even vertex. (Two vertices in a quiver are called \emph{adjacent} if there is at least one arrow connecting them in either direction.)

\medskip

{(C2)}  The number of arrows from $y_i\to x_k$ is the same as the number of arrows in the opposite direction; the number of arrows from $x_k$ to $y_j$ is the same as the number of arrows in the opposite direction. 

\medskip

\noindent At first sight these requirements on superquivers might seem too restrictive but as we will see later that a lot of interesting geometric examples of cluster superalgebras arise from superquivers which satisfy these restrictions.  

\bigskip

\subsection{Mutation} \label{subsection:Mutation}

\bigskip

\noindent We first define Fomin-Zelevinsky type mutation for our setting. We define two types of mutations: an even mutation and an odd mutation. We will denote the even mutation in the direction of a mutable even vertex $x_k$ as $\mu_k$ and odd mutation in the direction of a mutable odd vertex $y_k$ as $\eta_k$. In below, $x_i$, $y_j$ are not restricted to initial variables unless otherwise specified.

\subsubsection{Even Mutation}

\noindent We define the even mutation in the direction of vertex $x_k$ as
\begin{equation}
\mu_{k}(x_{1},\cdots,x_{m}, y_{1},\cdots,y_{n},Q)=(\mu_k(x_{1}), \cdots, \mu_k(x_{m}), \mu_k(y_{1}), \cdots, \mu_k(y_{n}), \mu_k(Q))
\end{equation}
where
\begin{equation}\label{evenmutation}
\begin{array}{l}
\mu_{k}(y_{i})=y_{i} \hspace{0.5cm} \mbox{for each $i$}\\
\mu_{k}(x_{i})=x_{i},\hspace{0.5cm} \mbox{for each $i\neq k$}\\
 \mu_{k}(x_{k})=\frac{1}{x_k}\bigg[(-1)^u \big(\underset{x_{i}\rightarrow x_{k}}{\prod}x_{i}\underline{}\big)+(-1)^v \big(\underset{x_{k}\rightarrow x_{j}}{\prod}x_{j}\big)+\underset{\xymatrix@C=0.75em{y_i \ar@/_1pc/[rr]|{\SelectTips{cm}{}\object@{/}}|{}\ar[r]
 		& x_k\ar[r]
 		& y_j}}{\sum}y_iy_j\underline{}\bigg]
\end{array}
\end{equation}
where $u$ is the total number of loops on all odd vertices adjacent to some even vertex $x_i$ with arrow $x_i\to x_k$, $v$ is the total number of loops on all odd vertices adjacent to some even vertex $x_j$ with arrow $x_k\rightarrow x_j$ (more precisely, if there are $a$ arrows $x_i\to x_k$, $b$ arrows between $y_j$ and $x_i$, and $c$ loops on $y_j$ then they contribute $abc$ to $u$; a similar definition applies to $v$). In the last term of $\mu_k(x_k)$, $\xymatrix@C=0.75em{y_i \ar@/_1pc/[rr]|{\SelectTips{cm}{}\object@{/}}|{}\ar[r]
 		& x_k\ar[r]
 		& y_j}$ means that $i\neq j$, no arrows from $y_i$ to $y_j$, and if there are $a$ arrows $y_i\rightarrow x_k$, $b$ arrows $x_k\rightarrow y_j$, then it will contribute $ab y_iy_j$ to the sum.  
\smallskip

\noindent The new superquiver $\mu_k(Q)$ is obtained from $Q$ by modifying vertices in view of the above mentioned exchange rules and changing arrows as follows:

\begin{itemize}
\item [(i)] If there is a path $x_{i}\rightarrow x_{k}\rightarrow x_{j}$, add an arrow $x_{i}\rightarrow x_{j}$ for each distinct path.
\item[(ii)] Reverse all arrows connecting $x_{k}$ to another even vertex.
\item[(iii)] Delete any 2-cycle produced between two even variables in the process.
\end{itemize}
It is easy to check that if (C1) (resp. (C2)) holds for $Q$, it also holds for $\mu_k(Q)$.
\begin{rmk}
(1) In \eqref{evenmutation}, the empty product will be considered to be 1 and the empty sum will be considered to be zero by convention. The same convention is applied throughout the paper. 

(2) The mutation $\mu_k(Q)$ is obtained from $Q$ by applying the classical mutation on the induced subquiver consisting of only even vertices, while keeping other arrows unaffected. 

(3) A priori, $\mu_k(x_k)$ may not be well-defined because the quotient $\frac{1}{x_k}$ may not make sense (if the cluster variable $x_k=x_k(t)$ is a zero-divisor). But as we shall see later in Theorem \ref{thm:Laurent}, the new variables obtained by mutation are well-defined, satisfy Laurent phenomenon in the sense that they are elements in $\mathbb K[x_1^{\pm 1}, \ldots, x_m^{\pm 1}] \otimes \mathbb K[y_1, \ldots, y_n]$, where  $x_i$, $y_j$ are initial variables, and $\mathbb K[y_1, \ldots, y_n]$ is the Grassmann algebra.

(4) The above even mutation of cluster variables is inspired by Ovsienko's preprint \cite[v1, Definition 2.4.1]{Ovsienko} which is given below for the reader's convenience:
$$
 \mu_{k}(x_{k})=\frac{1}{x_k}\Big( \underset{x_{i}\rightarrow x_{k}}{\prod}x_{i}\underline{}+\underset{x_{k}\rightarrow x_{j}}{\prod}x_{j}+(\sum_{y_i\to x_k}y_i)(\sum_{x_k\to y_j}y_j)(\prod_{x_i\to x_k} x_i)\Big)
$$
 which also has three terms in the numerator. In that preprint, the mutation is defined assuming the extended quiver to be a hypergraph. 

\end{rmk}

\bigskip

\subsubsection{Odd Mutation}

\noindent We define the odd mutation in the direction of an exchangeable odd vertex $y_i$ as

\begin{equation}
\eta_{i}(x_{1},\cdots,x_{m}, y_{1},\cdots,y_{n}, Q)=(\eta_i(x_{1}), \cdots, \eta_i(x_{m}), \eta_i(y_{1}), \cdots, \eta_i(y_{n}), \eta_i(Q))
\end{equation}

where

\begin{equation}\label{oddmutation}
\begin{array}{l}
\eta_{i}(y_{j})=y_{j},\hspace{0.5cm}j\neq i\\

\eta_{i}(y_{i})=\delta(y_i)y_i+\left(\underset{x_{k}\leftrightarrows y_i}{\prod} \frac{1}{x_k}\right)\left[\left(\underset{y_{i}\rightarrow y_{j}}{\sum}y_{j}\underline{}\right) \left(\underset{\xymatrix@C=0.75em{y_i \ar@/_1pc/[rr]\ar[r]
 & x_l\ar[r]
 & y_j}}{\prod}x_{l}\underline{}\right) +\left(\underset{y_{j}\rightarrow y_{i}}{\sum}y_{j}\underline{}\right) \left(\underset{\xymatrix@C=0.75em{y_j \ar@/_1pc/[rr]\ar[r]
 & x_l\ar[r]
 & y_i}}{\prod}x_{l}\underline{}\right)\right]\\
\eta_{i}(x_{k})=x_{k} \hspace{0.5cm} \mbox{for each $k$}
\end{array}
\end{equation}
where we define $\delta(y_i)=1$ if there is no arrow between $y_i$ and another odd vertex and $\delta(y_i)=0$ otherwise.
We count the multiplicity of ${x_{k}\leftrightarrows y_i}$ to be $ab$ if there are $a$ arrows $x_k\to y_i$ and $b$ arrows $y_i\to x_k$; 
we count the multiplicity of  $ {\xymatrix@C=0.75em{y_i \ar@/_1pc/[rr]\ar[r]
 & x_l\ar[r] & y_j}}$ to be $abc$ if there are $a$ arrows $y_i\to x_l$, $b$ arrows $x_l\to y_j$, $c$ arrows $y_i\to y_j$. 
\noindent  In the expressions $\left(\underset{y_{i}\rightarrow y_{j}}{\sum}y_{j}\underline{}\right)$, $\left(\underset{y_{j}\rightarrow y_{i}}{\sum}y_{j}\underline{}\right)$ and $\left(\underset{\xymatrix@C=0.75em{y_j \ar@/_1pc/[rr]\ar[r]
 & x_l\ar[r]
 & y_i}}{\prod}x_{l}\underline{}\right)$ above, we consider them only for $i\neq j.$ 

\bigskip

\noindent The new superquiver $\eta_i(Q)$ is obtained from $Q$ by modifying vertices in view of the above mentioned exchange rules and changing arrows as follows:

\bigskip

\begin{itemize}
\item [(i)] If there is a path $y_{k}\rightarrow y_{i}\rightarrow y_{j}$ for $k\neq i\neq j$, then add an arrow $y_{k}\rightarrow y_{j}$ for each distinct path.
\item[(ii)] Reverse all arrows incident on $y_{i}$ (including arrows incident on even variables).
\item[(iii)] Delete any 2-cycle produced between two odd variables in the process.
\end{itemize}

\bigskip

\begin{rmk}
(a) The even mutation is involutive, that is, $\mu^2_{k}=1$. 
Indeed, by definition \eqref{evenmutation}, 
$\mu_k(x_k)x_k=A_1+A_2+A_3$, where 
$$A_1=(-1)^u \big(\underset{x_{i}\rightarrow x_{k}}{\prod}x_{i}\underline{}\big),\quad 
A_2=(-1)^v \big(\underset{x_{k}\rightarrow x_{j}}{\prod}x_{j}\big), \quad
A_3=\underset{\xymatrix@C=0.75em{y_i \ar@/_1pc/[rr]|{\SelectTips{cm}{}\object@{/}}|{}\ar[r]
 		& x_k\ar[r]
 		& y_j}}{\sum}y_iy_j\underline{}$$
Similarly, $\mu^2_k(x_k)\mu_k(x_k)=A_2+A_1+A_3$. So $\mu^2_k(x_k)=x_k$. Obviously, $\mu^2_k$ fixes other variables. So $\mu_k^2=1$. 	

It follows that if there are $m'$ number of mutable even variables, then the exchange pattern for even vertices is an $m'$-regular tree. 

(b) The odd  mutation is not involutive in general. Nevertheless, we have $\eta^3_{i}=\eta_i$. To see this, we consider two cases. 

Case 1: if $\delta(y_i)=1$. Then there are no arrows $y_i\to y_j$ or $y_j\to y_i$, therefore $\eta_i(y_i)=y_i$. Since $\eta_i$ fixes other variables, we have $\eta_i=1$, thus $\eta_i^3=\eta_i$. 

Case 2: if $\delta(y_i)=0$. Then
$$
\eta_{i}(y_{i})=\left(\underset{x_{k}\leftrightarrows y_i}{\prod} \frac{1}{x_k}\right)\left[\left(\underset{y_{i}\rightarrow y_{j}}{\sum}y_{j}\underline{}\right) \left(\underset{\xymatrix@C=0.75em{y_i \ar@/_1pc/[rr]\ar[r]
 & x_l\ar[r]
 & y_j}}{\prod}x_{l}\underline{}\right) +\left(\underset{y_{j}\rightarrow y_{i}}{\sum}y_{j}\underline{}\right) \left(\underset{\xymatrix@C=0.75em{y_j \ar@/_1pc/[rr]\ar[r]
 & x_l\ar[r]
 & y_i}}{\prod}x_{l}\underline{}\right)\right]\\
$$
$$
\eta^2_{i}(y_{i})=\left(\underset{x_{k}\leftrightarrows y_i}{\prod} \frac{1}{x_k}\right)\left[\left(\underset{y_{i}\leftarrow y_{j}}{\sum}y_{j}\underline{}\right) \left(\underset{\xymatrix@C=0.75em{y_i 
 & x_l\ar[l]\ar[r]
 & y_j\ar@/^1pc/[ll]}}{\prod}x_{l}\underline{}\right) +\left(\underset{y_{j}\leftarrow y_{i}}{\sum}y_{j}\underline{}\right) \left(\underset{\xymatrix@C=0.75em{y_j \ar[r]
 & x_l
 & y_i\ar[l]\ar@/^1pc/[ll]}}{\prod}x_{l}\underline{}\right)\right]\\
$$
Note that
$\eta^3_{i}(y_{i})=\eta_{i}(y_{i})$  because $\eta^2(Q)=Q$, and $\eta_i(y_i)$ is determined by $Q$ and all variables except $y_i$. This completes the proof that $\eta_i^3=\eta_i$.

\end{rmk}

\bigskip

Now, we proceed to define cluster superalgebras.

\begin{dfn}
Let $\mathcal X_{even}$ be the set of all supercluster even variables that can be obtained by applying a sequence of even mutations to the initial seed $(X|Y, Q)$ and $\mathcal X_{odd}$ be the set of all supercluster odd variables that can be obtained by applying a sequence of odd mutations to the initial seed $(X|Y, Q)$. Then the cluster superalgebra $\mathcal C_{\mathbb K}(X|Y, Q)$ over a field $\mathbb K$ (of characteristic different from $2$) is defined to be the supercommutative $\mathbb K$-superalgebra generated by $\mathcal X_{even} \cup \mathcal X_{odd}$.   
\end{dfn}

\begin{rmk} \rm
If there are no Grassmann variables, then even mutation is exactly the same as Fomin-Zelevinsky mutation for classical cluster algebras.      
\end{rmk}

\begin{rmk}
Implicitly in the definition of cluster superalgebras, a sequence of mutations consisting of both even and odd mutations is not allowed when constructing a cluster superalgebra. However, a sequence of mutations consisting of both even and odd mutations is allowed in the mutation of a superquiver.
\end{rmk}

\bigskip

\subsection{Laurent Phenomenon of Cluster Superalgebras}

\bigskip

\noindent In the case of classical cluster algebras, Fomin and Zelevinsky proved the Laurent phenomenon of cluster algebras. For a skew-symmetrizable cluster algebra, it asserts that any cluster variable $x_k(t)$ can be written as $$u=\frac{f(x_1, \ldots, x_n)}{x_1^{d_1}\cdots x_n^{d_n}},$$ where $x_1,\dots,x_n$ are initial cluster variables, $f$ is a polynomial in $x_1, \ldots, x_n$ with coefficients in $\mathbb Z[x_{n+1}^{\pm},\dots,x_m^{\pm}]$ and $d_i\in \mathbb Z$. It has been recently proved in \cite{GHKK} and \cite{LR} that the coefficients in these Laurent polynomials are positive. 

In this subsection, we show that the supersymmetric analogue of the Laurent phenomenon holds.

Let $n$ be a positive integer, $\mathbb K$ be a field, $B(t_0)=[b_{ij}]$ be a skew-symmetrizable $n\times n$ matrix. Let $\mathbb{K}(z_1,\dots,z_n)$ be a field of rational functions with $n$ independent variables. With the initial seed  $(z_1,\dots,z_n,B(t_0))$, we can define the cluster variables $(z_1(t),\dots,z_n(t))$ at each vertex $t\in\mathbb{T}$ in the classical way, that is, fixing other cluster variables while replacing the variable with index $k$ by a new cluster variable
	\begin{equation}
	\mu_k(z_{k}(t))=\frac{1}{z_k(t)}\left[\left(\underset{b_{ik}^{(t)}>0}{\prod}z_{i}(t)^{b_{ik}^{(t)}}\underline{}\right)+
	\left(\underset{b_{ik}^{(t)}<0}{\prod}z_{i}(t)^{-b_{ik}^{(t)}}\right)\right]
	\end{equation}
with the mutation of $B$ defined as usual.

Now, define a commutative algebra $$S=\mathbb{K}[\varepsilon_1,\ldots,\varepsilon_n]/(\varepsilon_1^2-1,\ldots,\varepsilon_n^2-1).$$
Note that $S$ contains $\mathbb{K}$ as its subalgebra, The ``formal'' elements $\epsilon_i$ are not elements in $\mathbb{K}$, but we can define maps $S\to \mathbb{K}$ by setting $\epsilon_i \mapsto \pm1$, which would degenerate $S$ to $\mathbb{K}$.
We use a modified formula to define ``cluster variables'' as elements in the total quotient ring of $S[x_1,\dots,x_n]$:
	 \begin{equation}\label{eq: modified mutation}
	\mu_k(x_{k}(t))=\frac{1}{x_k(t)}\left[\left(\underset{b_{ik}^{(t)}>0}{\prod}(\varepsilon_ix_{i}(t))^{b_{ik}^{(t)}}\underline{}\right)+
	\left(\underset{b_{ik}^{(t)}<0}{\prod}(\varepsilon_ix_{i}(t))^{-b_{ik}^{(t)}}\right)\right]
\end{equation}
	and with $z_i=\varepsilon_i x_i$ for every $i$. A priori, \eqref{eq: modified mutation} may not be well-defined because $x_k(t)$ may be a zero-devisor in $S[x_1,\dots,x_n]$ (so may not be invertible in the total quotient ring of $S[x_1,\dots,x_n]$). But we have the following lemma to guarantee that it is well-defined. Note that the rings in lemma are commutative.

\begin{lem}\label{lem:phi z}
For each $1\le k\le n$, $x_k(t)$ is not a zero-divisor. The ring homomorphism $\varphi: \mathbb{K}[z_1^{\pm},\dots,z_n^{\pm}]\to S[x_1^{\pm},\dots,x_n^{\pm}]$, $z_i\mapsto \varepsilon_ix_i$ satisfies 
 $$\varphi(z_i(t))=\varepsilon_i x_i(t), \quad \textrm{ for all $i=1,\dots, n$ and all vertices $t\in\mathbb{T}$}.$$
\end{lem}

\begin{proof}
We prove by induction. The statement is obviously true in the base case when $t=t_0$.  We proceed by induction on the distance between $t$ and $t_0$ in $\mathbb{T}$. 
	\begin{equation*}
	\xymatrix{ 
		{ t_0}\ar@{-}[rr]&&
		{ \bullet}\ar@{-}[rr]&&
		{ \bullet}&
		\cdots &
		{ \bullet}\ar@{-}[rr]&&
		{ t}\ar@{-}^{k}[rr]&&
		{ t'}&&\\	
	}
	\end{equation*}
	We have that $z_k(t')=\mu_k(z_{k}(t))=\frac{1}{z_k(t)}\left[\left(\underset{b_{ik}^{(t)}>0}{\prod}z_{i}(t)^{b_{ik}^{(t)}}\underline{}\right)+
	\left(\underset{b_{ik}^{(t)}<0}{\prod}z_{i}(t)^{-b_{ik}^{(t)}}\right)\right].$ 
Now since by assumption $x_k(t)$ is not a zero-divisor, $\varepsilon_i^2=1$, and $z_k(t')$ is in $\mathbb{K}[z_1^{\pm},\dots,z_n^{\pm}]$, we have 
$$\varphi(z_k(t'))= \frac{1}{\varepsilon_kx_k(t)}\left[\left(\underset{b_{ik}^{(t)}>0}{\prod}(\varepsilon_ix_{i}(t))^{b_{ik}^{(t)}}\underline{}\right)+
	\left(\underset{b_{ik}^{(t)}<0}{\prod}(\varepsilon_ix_{i}(t))^{-b_{ik}^{(t)}}\right)\right]=\frac{1}{\varepsilon_k}(x_k(t'))=\varepsilon_k x_k(t').$$

Next, we shall show that $x_k(t')$ is not a zero-divisor in $S[x_1^{\pm},\dots,x_n^{\pm}]$. This follows from McCoy's theorem which, after easy modification, can be stated as: if a Laurent polynomial $f$ is a zero-divisor in $S[x_1^{\pm},\dots,x_n^{\pm}]$, then $rf=0$ for some nonzero $r\in S$. In our situation, denote the Laurent expansion $z_k(t')=\sum a_{i_1\cdots i_n}z_1^{i_1}\cdots z_n^{i_n}$. Then we have a Laurent expansion for $x_k(t')$:
$$x_k(t')=\varepsilon_k\varphi(z_k(t'))=\varepsilon_k\sum a_{i_1\cdots i_n}\varepsilon_1^{i_1}\cdots \varepsilon_n^{i_n}z_1^{i_1}\cdots z_n^{i_n}, \quad a_{i_1\cdots i_n}\in\mathbb{K}.$$ 
If there is a nonzero $r\in S$ such that $rx_k(t')=0$, then $\varepsilon_ka_{i_1\cdots i_n}\varepsilon_1^{i_1}\cdots \varepsilon_n^{i_n}=0$ for all $i_1,\dots,i_n\in\mathbb{Z}$. Since $\varepsilon_i$ are units, we must have $a_{i_1\cdots i_n}=0$ for all $i_1,\dots,i_n\in\mathbb{Z}$, which implies $z_k(t')=0$, a contradiction. Therefore $x_k(t')$ is not a zero-divisor.
\end{proof}
As a consequence, we have 
	\begin{lem}\label{lem:mu_k(x_k) epsilon}
	Let $x_1,\ldots,x_n$ be initial even cluster variables and $\varepsilon_i\in\{1,-1\}$. For a skew-symmetrizable matrix $B=[b_{ij}],$ define mutation as
	 \begin{equation}
	\mu_k(x_{k})=\frac{1}{x_k}\left[\left(\underset{b_{ik}>0}{\prod}(\varepsilon_ix_{i})^{b_{ik}}\underline{}\right)+
	\left(\underset{b_{ik}<0}{\prod}(\varepsilon_ix_{i})^{-b_{ik}}\right)\right]
\end{equation} 
where the mutation of $B$ is defined as usual. Then $x_k(t)$ after any sequence of mutations as defined above is a Laurent polynomial in the initial even cluster variables.
	\end{lem}
	
	\begin{proof}
It follows immediately from Lemma \ref{lem:phi z}.
	\end{proof}
	
Now we are ready to prove the Laurent phenomenon for cluster superalgebras 	with the restriction C1 or C2 on superquivers.

\begin{thm}\label{thm:Laurent}
Let $(X|Y,Q)$ be the initial seed where $X=\{x_1,\dots,x_m\}$, $Y=\{y_1,\dots,y_n\}$, and $Q$ satisfies the restriction on superquivers given before \S\ref{subsection:Mutation}. Then,

(a) after any sequence of iterated even mutations, $x'_k:=\mu_{i_k}\circ \cdots \circ\mu_{i_1}(x_{k})$ is a Laurent polynomial in the initial supercluster variables $x_1,\dots,x_m,y_1,\dots,y_n$. 

(b) after any sequence of iterated odd mutations, $y'_k:=\eta_{i_k}\circ \cdots \circ\eta_{i_1}(y_{k})$ is a Laurent polynomial in the initial supercluster variables $x_1,\dots,x_m,y_1,\dots,y_n$.

As a consequence, all the cluster variables are well-defined, and the cluster superalgebra is a subalgebra of $\mathbb K[x_1^{\pm 1}, \ldots, x_m^{\pm 1}] \otimes \mathbb K[y_1, \ldots, y_n]$ (where $\mathbb K[y_1, \ldots, y_n]$ is the Grassmann algebra).
\end{thm}

\begin{proof}
(a) Since a frozen or odd variable will remain unaffected by iterated even mutations, we only need to consider the case when $x_k$ is even mutable. 

Assume $x_k$ is even mutable. Let $Q_X$ be the induced subquiver of $Q$ with vertex set $X$, and let $Q_{X,k}$ be the connected component (in the sense of the underlying undirected graph) of $Q_X$ containing $x_k$. Let $Q'_{X,k}$ be the induced subquiver of $Q$ whose vertex set is 
$$\{\textrm{vertices in }Q_{X,k}\}\cup\{\textrm{vertices in $Q$ that are  adjacent to vertices in }Q_{X,k}\}.$$ 
So without loss of generality we assume that $Q'_{X,k}=Q_k$ (that is, in the underlying undirected graph of $Q$, for any two even mutable vertices $x_i$ and $x_j$ there is a connected path between them whose vertices are all even mutable). Consider the following two cases.

Case 1: there is only one even mutable vertex, say $x_k$. The new variable $\mu_k(x_k)$ is obviously a Laurent polynomial in the initial supercluster variables. Since $\mu_k$ is involutive, there is no other new variable. So we are done in this case.

Case 2: there are at least two even mutable vertices. Since (C1) does not hold,  (C2) must hold. 
The restriction (C2) on superquiver ensures that the number of arrows $y_i\rightarrow x_k$ is the same as the number of arrows  $x_k\rightarrow y_i$. Similarly, the number of arrows $x_k\rightarrow y_j$ is the same as number of arrows $y_j\rightarrow x_k$. Therefore if there are $b$ arrows $y_i\rightarrow x_k$ (and hence $b$ arrows $x_k\rightarrow y_i$) and $c$ arrows $x_k\rightarrow y_j$ (and hence $c$ arrows $y_j\rightarrow x_k$), then we will have a term $bcy_iy_j +bcy_jy_i$ in the summation of the last formula of (3.2). But as $y_iy_j=-y_jy_i$, this term $bcy_iy_j +bcy_jy_i$ reduces to zero. Therefore,  the summation of the last formula of (3.2) is 0, and the last formula in \eqref{evenmutation} reduces to 
$$ \mu_{k}(x_{k})=\frac{1}{x_k}\bigg[(-1)^u \big(\underset{x_{i}\rightarrow x_{k}}{\prod}x_{i}\underline{}\big)+(-1)^v \big(\underset{x_{k}\rightarrow x_{j}}{\prod}x_{j}\big)\bigg]=\frac{1}{x_k}\left[\left(\underset{b_{ik}>0}{\prod}(\varepsilon_ix_{i})^{b_{ik}}\underline{}\right)+
	\left(\underset{b_{ik}<0}{\prod}(\varepsilon_ix_{i})^{-b_{ik}}\right)\right]
$$
where $\varepsilon_i$ is $+1$ if the total number of loops on all vertices adjacent to $x_i$ is even, or $-1$ otherwise (here we count the multiplicity as follows: if there are  $b$ arrows between $y_j$ and $x_i$ and $c$ loops on $y_j$ then they contribute $bc$ to the total number). Then the conclusion follows from Lemma \ref{lem:mu_k(x_k) epsilon}.

(b) Since the denominator in the formula of $\eta_i(y_i)$ is a product of some initial even variable $x_k$, and the initial even variables remain unchanged under odd mutations, $y_k'$ is computed by an expression whose denominator is a product of some initial even variable $x_k$, and numerator is a polynomial of initial even variables and (the possibly non-initial) odd variables of the form $\eta_{i_{k-1}}\circ \cdots \circ\eta_{i_1}(y_{j})$. By induction, the latter is Laurent polynomial in $X\cup Y$ whose denominator is a product of some initial even variables. Therefore $y_k'$ is a quotient whose denominator is a product of some initial even variable in $X$ and numerator is a polynomial in $X\cup Y$. This proves the assertion.
\end{proof}

\bigskip

To illustrate the above lemmas and theorem, we consider the following examples. 

\begin{example}
(a) 
		\begin{equation*} Q_1=
	\xymatrix{ 
	&&	 y_1\ar@{->}[d]\ar@(ur,ul)_{\id}&&
		 y_2\ar@{->}[d]\ar@(ur,ul)_{\id}\\
		 x_1\ar@{->}[rr] && x_2\ar@{<-}[rr]&&x_3\\
	}
	\end{equation*}\\
	
		\begin{equation*}Q_2=
	\xymatrix{ 
		z_1\ar@{->}[rr] && z_2\ar@{<-}[rr]&&z_3\\
	}
	\end{equation*}
	
	\bigskip
	
\noindent 	Here, set $(\varepsilon_1,\varepsilon_2,\varepsilon_3)=(1,-1,-1).$ On the superquiver $Q_1$,  there is no path of the form $y_i\to x_k\to y_j$ satisfying  ``$i\neq j$, $x_k$ is mutable, and there are no arrows from $y_i$ to $y_j$'', 
so the conditions (C1) and (C2) are vacuously satisfied. We apply rules for even mutation. On quiver $Q_2$ we apply mutation as in classical cluster algebras. We can see that $$\mu_1(x_1)=\frac{1-x_2}{x_1}, \  \mu_2\circ\mu_1(x_2)=\frac{1-x_2-x_1x_3}{x_1x_2}$$ and $\mu_1\circ\mu_2\circ\mu_1(x_1)=\frac{x_1x_3-1}{x_2}.$ 
Also, $$\mu_1(z_1)=\frac{1+z_2}{z_1}, \  \mu_2\circ\mu_1(z_2)=\frac{1+z_2+z_1z_3}{z_1z_2}$$ and $\mu_1\circ\mu_2\circ\mu_1(z_1)=\frac{z_1z_3+1}{z_2}.$
One can verify by computation that $x_k(t)=\varepsilon_kz_k(t)|_{z_i\mapsto \varepsilon_ix_i}.$

\medskip

(b) Here we give an example where the superquiver $Q$  satisfies the condition (C2) but not (C1). The initial seed is $(X|Y)$, where $X=\{x_1,x_2,x_3\}$, $Y=\{y_1,y_2,y_3,y_4\}$.

		\begin{equation*} Q=
	\xymatrix{ 
	         y_2\ar@/^.3pc/@{->}[dr]\ar@/_.3pc/@{<-}[dr] &	 y_1\ar@/^.2pc/@{->}[d]\ar@/_.2pc/@{<-}[d]\ar@{<-}[l]&
		 y_3\ar@/^.3pc/@{->}[dl]\ar@/_.3pc/@{<-}[dl] \ar@{<-}[l]\\
		 x_2\ar@{->}[r] & x_1\ar@{->}[r]&x_3\\
		 & y_4\ar@/^1pc/@{<-}[u]\ar@/^.2pc/@{<-}[u]\ar@/_1pc/@{->}[u]\ar@/_.2pc/@{->}[u]
	}
	\end{equation*}
We have
$$\aligned
\mu_1(x_1)
&= \frac{1}{x_1}\bigg[\big(\underset{x_{i}\rightarrow x_{1}}{\prod}x_{i}\underline{}\big)+ \big(\underset{x_{1}\rightarrow x_{j}}{\prod}x_{j}\big)+\underset{\xymatrix@C=0.75em{y_i \ar@/_1pc/[rr]|{\SelectTips{cm}{}\object@{/}}|{}\ar[r]
 		& x_1\ar[r]
 		& y_j}}{\sum}y_iy_j\underline{}\bigg]\\
&= \frac{1}{x_1}\bigg[(x_2)+ (x_3)+(2y_2y_4+2y_1y_4+2y_3y_4+2y_4y_2+2y_4y_1+2y_4y_3)\bigg]
= \frac{x_2+x_3}{x_1}\\		
\endaligned		
$$
Let us also compute a sequence of odd mutations:

$\eta_1(y_1)=\frac{1}{x_1}[y_3x_1+y_2x_1]=y_2+y_3$, and
		\begin{equation*} \eta_1(Q)=
	\xymatrix{ 
	         y_2\ar@/^.3pc/@{->}[dr]\ar@/_.3pc/@{<-}[dr] \ar@/^1pc/@{->}[rr]&	 y_1\ar@/^.2pc/@{->}[d]\ar@/_.2pc/@{<-}[d]\ar@{->}[l]&
		 y_3\ar@/^.3pc/@{->}[dl]\ar@/_.3pc/@{<-}[dl] \ar@{->}[l]\\
		 x_2\ar@{->}[r] & x_1\ar@{->}[r]&x_3\\
		 & y_4\ar@/^1pc/@{<-}[u]\ar@/^.2pc/@{<-}[u]\ar@/_1pc/@{->}[u]\ar@/_.2pc/@{->}[u]
	}
	\end{equation*}

 $\eta_2\eta_1(y_2)=\frac{1}{x_1}[y_3x_1+\eta_1(y_1)x_1]=y_2+\eta_1(y_1)=2y_2+y_3$, and
		\begin{equation*} \eta_2\eta_1(Q)=
	\xymatrix{ 
	         y_2\ar@/^.3pc/@{->}[dr]\ar@/_.3pc/@{<-}[dr] \ar@/^1pc/@{<-}[rr]&	 y_1\ar@/^.2pc/@{->}[d]\ar@/_.2pc/@{<-}[d]\ar@{<-}[l]&
		 y_3\ar@/^.3pc/@{->}[dl]\ar@/_.3pc/@{<-}[dl]\\
		 x_2\ar@{->}[r] & x_1\ar@{->}[r]&x_3\\
		 & y_4\ar@/^1pc/@{<-}[u]\ar@/^.2pc/@{<-}[u]\ar@/_1pc/@{->}[u]\ar@/_.2pc/@{->}[u]
	}
	\end{equation*}
	
 $\eta_3\eta_2\eta_1(y_3)=\frac{1}{x_1}[\eta_2\eta_1(y_2)x_1+0]=2y_2+y_3$.

\end{example}

\bigskip

\section{Combinatorial geometric model of even and odd mutations}

\bigskip

\noindent It is well known that flip along a diagonal in the triangulation of a regular polygon provides a geometric model for mutation in the case of cluster algebras. In the similar spirit, we would like to provide a combinatorial geometric model for odd mutations in the case of cluster superalgebras. As we have already noted that if there are no Grassmann variables, then even mutation is exactly the same as Fomin-Zelevinsky mutation for classical cluster algebras. Consider a planar on-shell diagram $B$ given as follows: 
\begin{figure}[h]
 \centering 
\includegraphics[width=4cm]{./comb1.pdf}
\end{figure}

\noindent We associate a quiver $Q=Q(B)$ with the bipartite graph $B$ as discussed in the introduction.
We consider a vertex for each face and for each edge in the graph $B$, we draw an arrow in this quiver in such a way that it sees the white vertex in left. So we get the following quiver:

$$
\begin{array}{ccccccc}
\xymatrix{
&	& &3\ar@{<-}[rrdd]&&&\\
&	&  &&
	&&\\
Q=Q(B):&	4\ar@{<-}[rr]\ar@{->}[uurr]\ar@/^1.0pc/@{<-}[uurr]&& 1\ar@{->}[rr]\ar@{<-}[uu]\ar@{<-}[dd]&&
	  2\ar@/_1.0pc/@{<-}[uull] &&\\
&	&&&&&\\
&	&&5\ar@{<-}[lluu]\ar@{<-}[rruu]\ar@/^1.0pc/@{->}[uull]\ar@/_1.0pc/@{->}[uurr] &&&
}
\end{array}
$$

\noindent If we do the ``flip" move on $B$ (which swaps white and black vertices), we get the new planar on-shell diagram $B'$ as 
\begin{figure}[H]
 \centering
 \includegraphics[width=5cm]{./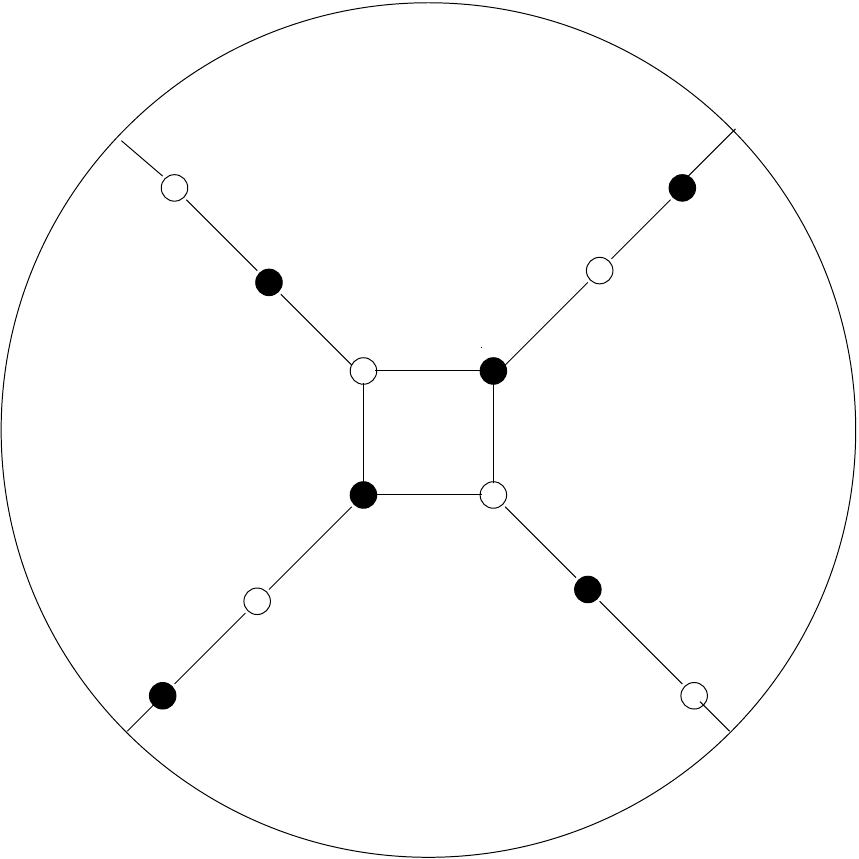}
\end{figure}

Clearly the quiver $Q'=Q(B')$ associated to this bipartite graph is 

$$
\begin{array}{ccccccc}
\xymatrix{
	&	& &3\ar@{->}[rrdd]&&&\\
	&	&  &&
	&&\\
	Q'=Q(B'):&	4\ar@{->}[rr]\ar@{<-}[uurr]\ar@/^1.0pc/@{->}[uurr]&& 1\ar@{<-}[rr]\ar@{->}[uu]\ar@{->}[dd]&&
	2\ar@/_1.0pc/@{->}[uull] &&\\
	&	&&&&&\\
	&	&&5\ar@{->}[lluu]\ar@{->}[rruu]\ar@/^1.0pc/@{<-}[uull]\ar@/_1.0pc/@{<-}[uurr] &&&
}
\end{array}
$$

\noindent Now, if in the superquiver $Q$, we take vertices $1, 2, 4$ as odd vertices and vertices $3, 5$ as even vertices, then our superquiver $Q$ satisfies the hypothesis (C2) and under our definition of even and odd mutations we have $\eta_{4}\circ\eta_{2}\circ\eta_{1}(Q)=Q'$. 

Note that swapping black and white vertices physically represents switching the helicity of fundamental particles involved in scattering. This shows that the ``flip" move of planar on-shell diagram (that is, switching helicity) provides a geometric combinatorial model for a sequence of odd mutations.

\bigskip

\section{Examples of Cluster Superalgebras}

\bigskip

\subsection{Symplectic-orthogonal supergroup}

\bigskip

\noindent We give a brief introduction on supergroup here, for more details please see Manin's books \cite[Chapter 2]{Manin2} and \cite{Manin} for more details. 

A supermatrix over a supercommutative superalgebra $\mathcal A$ is a matrix $M=\left(\begin{array}{cc}
A & B\\
C & D
\end{array}\right)$, where the matrices $A,D$ have even entries and they are of sizes $m\times m$ and $n\times n$, respectively. The matrices $B,C$ have odd entries
and are of sizes $m\times n$ , $n\times m$, respectively.

The general linear supergroup $GL(m|n)$ is the group of all invertible supermatrices $M$ of size $(m+n)\times (m+n)$, i.e. supermatrices $M$ such that the superdeterminant 
$$sdet(M)=det(A-BD^{-1}C)det(D^{-1})$$ 
is invertible in $\mathcal A$, or equivalently,  if both $A$ and $D$ are invertible in the usual sense.  Superdeterminant is also known as Berizinian \cite{Manin}. 
Let $A^{t}$ denote transpose 
of a matrix $A$ and $M^{st}$ denote the supertranspose of a supermatrix M. Then 

$$M^{st}=\left(\begin{array}{cc}
A & B\\
C & D
\end{array}\right)^{st}=\left(\begin{array}{cc}
A^{t} & C^{t}\\
-B^{t} & D^{t}
\end{array}\right)$$

Let us write 
$$J_{m}=\left(\begin{array}{cc}
0 & I_{m}\\
-I_{m} & 0
\end{array}\right),
\quad
K_{2k+1}=\left(\begin{array}{cccc}
-1 &0&0\\
0&0&-I_k\\
0&-I_k&0
\end{array}\right),
\quad
K_{2k}=\left(\begin{array}{cccc}
0&-I_k\\
-I_k&0
\end{array}\right)
$$
and 
$$J_{m,n}=diag(J_{m}, K_{n}).$$

\medskip

\noindent The symplectic-orthogonal supergroup $SpO(2m|n)$ is defined via its functor of points. Let $A=A_0\oplus A_1$ be a supercommutative superalgebra. The set of $A$-points of the symplectic-orthogonal supergroup $SpO(2m|n)$ consists of $(2m+n)\times (2m+n)$ supermatrices $M$ with entries in $A$ such that $sdet(M)=1$ and $M^{st} J_{m,n} M=J_{m,n}$. Note that this supergroup is denoted as $PC(2|1)=SC(2|1)=SpO(2|1)_0$ in Manin's book \cite[p37]{Manin2}).

\bigskip
We have the following analogue of a result by Ovsienko \cite[v1, Example 4.6]{Ovsienko}. 
\begin{thm} \label{osp1}
The symplectic-orthogonal supergroup $SpO(2|1)$ over any supercommutative superalgebra $A=A_0\oplus A_1$ admits a cluster superalgebra structure.
\end{thm}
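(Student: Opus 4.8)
The plan is to realize $SpO(2|1)$ concretely as a variety of supermatrices and then exhibit an explicit superquiver whose cluster superalgebra reproduces the coordinate superalgebra. First I would write down the defining equations. A general element $M\in SpO(2|1)$ is a $3\times 3$ supermatrix with a $2\times 2$ even block, two odd $2\times 1$ and $1\times 2$ blocks, and a $1\times 1$ even block, subject to $M^{st}J_{1,1}M=J_{1,1}$ with $J_{1,1}=\mathrm{diag}(J_1,K_1)$ where $J_1=\left(\begin{smallmatrix}0&1\\-1&0\end{smallmatrix}\right)$ and $K_1=(-1)$. Expanding this matrix identity produces a small set of relations among the entries: the even $2\times 2$ block must be (up to a correction by the odd entries) symplectic, i.e. essentially have superdeterminant one, and the odd entries satisfy bilinear constraints coming from the off-diagonal blocks. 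I would solve these relations to identify a minimal generating set, expecting two even generators (say $x_1,x_2$) and two odd generators (say $y_1,y_2$), with one even relation of the form $x_1x_2 = 1 + (\text{quadratic in the } y\text{'s})$ and the odd entries of $M$ expressed through $y_1,y_2$.

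The next step is to guess the initial seed. Guided by Example~4.5, whose output $\mu_1(x_1)=\frac{1}{x_1}(1+x_2+2y_2y_1)$ has exactly the shape $x_1x_1' = 1 + x_2 + (\text{odd quadratic})$ that matches the anticipated $SpO(2|1)$ relation, I would propose a superquiver $Q$ on vertices $x_1,x_2,y_1,y_2$ in which $x_1$ is mutable, $x_2$ is frozen, and the odd vertices feed into $x_1$ through $2$-cycles arranged so that the $y_iy_j$ term appears in $\mu_1(x_1)$ with the correct coefficient. I would then compute $\mu_1(x_1)$, $\eta_1(y_1)$, and $\eta_2(y_2)$ from the definitions in \eqref{evenmutation} and \eqref{oddmutation} and check that the full collection $\{x_1,x_2,y_1,y_2,\mu_1(x_1),\eta_1(y_1),\eta_2(y_2),\dots\}$ of supercluster variables matches, under an explicit identification with the matrix entries, a generating set for the coordinate superalgebra, and that the cluster-superalgebra relations coincide with the solved defining relations of $SpO(2|1)$.

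The verification that this identification is an isomorphism of superalgebras breaks into two inclusions. One shows every matrix-entry generator is a polynomial (or Laurent polynomial, using the Laurent phenomenon of Theorem~3.6) in the supercluster variables; the other shows every supercluster variable lies in the coordinate superalgebra and that no extra relations are imposed. Since $SpO(2|1)$ is low-dimensional, the orbit under mutation should be finite, so I would enumerate it completely and confirm closure. Checking that condition (C2) holds for the chosen $Q$ guarantees the Laurent phenomenon applies and that the even mutation is well defined.

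I expect the main obstacle to be the bookkeeping of signs and parities when expanding $M^{st}J_{1,1}M=J_{1,1}$: the supertranspose introduces a sign on the $B$-block, the odd entries anticommute, and matching the resulting quadratic odd relations against the $y_iy_j$ terms produced by the mutation rules (including the loop-induced factors $(-1)^u,(-1)^v$ and the coefficient $2$ seen in Example~4.5) requires care to land on exactly the right superquiver. Once the correct seed is pinned down, identifying the two sides is expected to be a direct, if delicate, computation rather than a conceptual difficulty.
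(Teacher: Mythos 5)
Your overall strategy---write out the defining relations of $SpO(2|1)$ from $M^{st}J_{1,1}M=J_{1,1}$ and then design a seed whose exchange relations reproduce them---is exactly the paper's. But the concrete guesses you commit to would derail the computation. First, the generating set is wrong: expanding the matrix identity gives $ad=1+bc+\alpha\beta$, $e=1+\alpha\beta$, $\gamma=a\beta-b\alpha$, $\delta=c\beta-d\alpha$, so the coordinate superalgebra needs \emph{four} even generators $a,b,c,d$ and two odd ones, not two of each; the paper's seed is $X=\{a,b,c\}$ with $b,c$ frozen, $Y=\{\alpha,\beta\}$, and the quiver $b\leftarrow a\rightarrow c$, $\alpha\rightarrow a\rightarrow\beta$, so that $\mu_a(a)=\frac{1}{a}[1+bc+\alpha\beta]=:d$. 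Your template, Example~4.5, produces $x_1x_1'=1+x_2+2y_2y_1$, which has the wrong even term ($x_2$ instead of the product $bc$ of two frozen neighbours) and the wrong coefficient ($2$, coming from the doubled arrows) --- a single pair $\alpha\rightarrow a\rightarrow\beta$ with $\alpha\nrightarrow\beta$ is what yields coefficient $1$. Second, your plan to ``check that (C2) holds'' is self-defeating: as the proof of Theorem~3.6 makes explicit, condition (C2) is precisely the hypothesis under which the $y_iy_j$ term \emph{never} contributes to an even exchange relation, so under (C2) you could not produce the $\alpha\beta$ term at all. The seed actually used satisfies (C1) instead --- every even neighbour of the mutable vertex $a$ is frozen --- which is what simultaneously allows the $\alpha\beta$ term and keeps the Laurent property (via $\mu_a^2=\mathrm{id}$).

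Third, your closing step (``the orbit under mutation should be finite, so I would enumerate it completely'') is false as stated: only the set of reachable superquivers is finite (four of them), while the set of odd supercluster variables is infinite, namely $\mathcal X_{odd}=\{a^i\alpha: i\ge 0\}\cup\{a^i\beta : i\ge 0\}$. The paper's closure argument is instead an induction: for each of the four reachable quivers, the odd mutation formula \eqref{oddmutation} sends $\alpha'\mapsto\beta'$ or $a\beta'$ and $\beta'\mapsto\alpha'$ or $a\alpha'$, so every new odd variable stays in the subalgebra generated by $a,\alpha,\beta$, while $\mathcal X_{even}=\{a,b,c,d\}$ because even mutation is involutive. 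These are the pieces your outline is missing; without them the identification of the cluster superalgebra with the coordinate superalgebra does not go through.
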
 

\begin{proof}
Let $A=A_{0}\oplus A_{1}$ be a supercommutative superalgebra. The set of $A$-points of the symplectic-orthogonal supergroup $SpO(2|1)$ consists of supermatrices $\left(\begin{array}{ccc}
a & b & \gamma\\
c & d & \delta\\
\alpha & \beta & e
\end{array}\right)$ satisfying the following identities\footnote{Here is the detail: 
$M^{st} J_{1,1} M=J_{1,1}$
$\Longrightarrow$
$
\begin{pmatrix}
a & c & \alpha\\
b & d & \beta\\
-\gamma & -\delta & e
\end{pmatrix}
\begin{pmatrix}
0&1&0\\
-1&0&0\\
0&0&-1
\end{pmatrix}
\begin{pmatrix}
a & b & \gamma\\
c & d & \delta\\
\alpha & \beta & e
\end{pmatrix}
=\begin{pmatrix}
0&-bc+ad-\alpha\beta&-c\gamma+a\delta-\alpha e\\
-ad+bc-\beta\alpha&0&-d\gamma+b\delta-\beta e\\
a\delta-c\gamma-\alpha e&b\delta-\alpha\gamma-e\beta &2\delta\gamma-e^2
\end{pmatrix}
=\begin{pmatrix}
0&1&0\\
-1&0&0\\
0&0&-1
\end{pmatrix}
$
$\Rightarrow$
\textcircled{1} $ad-bc-\alpha\beta=1$, \textcircled{2} $\alpha e-a\delta+c\gamma=0$, \textcircled{3} $\beta e-b\delta+d\gamma=0$, \textcircled{4} $2\delta \gamma-e^2=-1$. 

From \textcircled{1} we have the first identity of \eqref{OSpRule21}. 

Now we prove the second identity. From \textcircled{2}\textcircled{3} we get \textcircled{5}$\alpha\beta e^2=(a\delta-c\gamma)(b\delta-d\gamma)=(ad-bc)\gamma\delta$; multiplying both sides by $\gamma\delta$, we get \textcircled{6} $\alpha\beta\gamma\delta e^2=0$.
Rewrite \textcircled{4} in the form \textcircled{7} $e^2=1-2\gamma\delta$. Substituting this in \textcircled{6}, we have
$\alpha\beta\gamma\delta(1-2\gamma\delta)=\alpha\beta\gamma\delta=0$. Therefore we can rewrite \textcircled{5} as
$\alpha\beta(e^2+2\gamma\delta)=(ad-bc-\alpha\beta)\gamma\delta$. By \textcircled{1}\textcircled{6}, the above reduces to \textcircled{8}  $\alpha\beta=\gamma\delta$. 
Meanwhile, $1=sdet(M)=det(\begin{pmatrix} a&b\\c&d\end{pmatrix}-e^{-1}\begin{pmatrix}\gamma\alpha&\gamma\beta\\ \delta\alpha&\delta\beta\end{pmatrix})e^{-1} 
= (ad-bc+e^{-1}(d\alpha\gamma+a\beta\delta-b\alpha\sigma-c\beta\gamma))e^{-1}
=(1+x)e^{-1}$, where
$x= \alpha\beta+e^{-1}(d\alpha\gamma+a\beta\delta-b\alpha\sigma-c\beta\gamma)$ and it is easy to check that $x^2=0$. Thus $e=1+x$, $e^2=(1+x)^2=1+2x$. Comparing with \textcircled{7}, we get $x=-\gamma\delta=-\alpha\beta=\beta\alpha$, therefore $e=1+x=1+\beta\alpha$, and the second identity of \eqref{OSpRule21} is proved. 

To show the third and fourth identities, substituting $e=1+\beta\alpha$ in 
\textcircled{2} and \textcircled{3}, we get $\alpha=a\delta-c\gamma$, $\beta=b\delta-d\gamma$. Then $b\alpha-a\beta=b(a\delta-c\gamma)-a(b\delta-d\gamma)=(ad-bc)\gamma=(1+\alpha\beta)\gamma=(1+\gamma\delta)\gamma=\gamma$, and $d\alpha-c\beta=d(a\delta-c\gamma)-c(b\delta-d\gamma)=(ad-bc)\delta=(1+\alpha\beta)\delta=(1+\gamma\delta)\delta=\delta$. 
}:

\begin{equation} \label{OSpRule21}
ad=1+bc+\alpha \beta, e=1+\beta\alpha, \gamma = b \alpha-a \beta  , \delta=d \alpha-c \beta . 
\end{equation}

The elements $a, b, c, d, e \in A_{0}$ and $\alpha, \beta, \gamma, \delta \in A_{1}$. Note that the elements $a, b, c, d, \alpha, \beta$ generate $SpO(2|1)$. Interested readers may see \cite{Manin2} for the detailed computation on how to get the above relations.   

Choose $a, b, c, \in A_{0}$ and $\alpha, \beta \in A_{1}$. Consider the initial seed $(X|Y)$ with $X=\{a, b, c\}$ where $b, c$ are frozen and $Y=\{\alpha, \beta\}$ and consider the following superquiver $Q$: 

\bigskip

$$
\begin{array}{ccccccc}
 \xymatrix{
& \alpha \ar@{->}[rd]  &&
 \beta \ar@{<-}[ld]&&\\
{\color{blue} b}\ar@{<-}[rr]&& a\ar@{->}[rr]&& {\color{blue} c} &&
}
\end{array}
$$

\bigskip

\noindent Note that $Q$ satisfies the restriction (C1) but not (C2); the only path of the form $y_i\to x_k\to y_j$ is $\alpha\to a\to\beta$, and $a$ is not adjacent to any mutable even vertex (since $b,c$ are not mutable). We have 
\begin{equation}
\mu_{a}(a)=\frac{1}{a}[bc+1+\alpha\beta]
\end{equation}
where the third term $\alpha\beta$ is the summation in the last formula of \eqref{evenmutation}. Set $\mu_{a}(a)=d$. Then  $ad=1+bc+\alpha\beta$. This gives us the first relation of the equation \eqref{OSpRule21}. Note that $\mu_a^2(a)=a$, so iterating even mutations does not produce more new mutable even variables other than $a$ and $d$, thus the set of even variables is $\{a,b,c,d\}$.

Next, it is clear that every odd variable can be generated by $\{a,b, c, \alpha,\beta\}$. This shows that the symplectic-orthogonal supergroup $SpO(2|1)$ over any supercommutative superalgebra $A=A_0\oplus A_1$ admits a cluster superalgebra structure.
\end{proof}

\noindent We do not know whether the more general result that the symplectic-orthogonal supergroup 
$SpO(2m|n)$ over any supercommutative superalgebra $A=A_0\oplus A_1$ admits a cluster superalgebra structure is true or not. 

\bigskip

\noindent For $m=1$ and $n=2$, we have

\begin{thm} \label{osp2}
The symplectic-orthogonal supergroup $SpO(2|2)$ over any supercommutative superalgebra $A=A_0\oplus A_1$ is quotient of a cluster superalgebra.
\end{thm}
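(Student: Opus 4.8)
The plan is to follow the architecture of the proof of Theorem \ref{osp1}, while confronting the new feature that for $SpO(2|2)$ the orthogonal block is $2\times 2$ rather than $1\times 1$. First I would write out the defining equations explicitly. Writing an $R$-point as
$$M=\begin{pmatrix} a & b & \gamma_1 & \gamma_2\\ c & d & \delta_1 & \delta_2\\ \alpha_1 & \beta_1 & e_{11} & e_{12}\\ \alpha_2 & \beta_2 & e_{21} & e_{22}\end{pmatrix}=\begin{pmatrix} A & B\\ C & D\end{pmatrix},$$
with $A,D$ even and $B,C$ odd, the relation $M^{st}J_{1,2}M=J_{1,2}$ splits into four blocks. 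The $(1,1)$-block yields the symplectic relation $ad-bc=1+\alpha_1\beta_2+\alpha_2\beta_1$ (the cross pairing coming from the off-diagonal form $K_2$); the $(1,2)$- and $(2,1)$-blocks are linear in $B$ and express $\gamma_1,\gamma_2,\delta_1,\delta_2$ in terms of $A,C,D$, together with one compatibility relation; and the $(2,2)$-block gives an orthogonality relation of the form $D^{t}K_2D=K_2+B^{t}J_1B$ for the block $D$.

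Second, I would set up the seed. Take even variables $a,b,c$ with $b,c$ frozen, adjoin the four entries $e_{ij}$ as frozen even variables, and take odd variables $\alpha_1,\alpha_2,\beta_1,\beta_2$, on the superquiver with $a\to b$, $a\to c$, arrows $\alpha_1,\alpha_2\to a$ and $a\to\beta_1,\beta_2$, and the odd arrows $\alpha_1\to\beta_1$, $\alpha_2\to\beta_2$ inserted precisely to suppress the diagonal paths $\alpha_i\to a\to\beta_i$, so that the odd sum in \eqref{evenmutation} contributes only the cross terms $\alpha_1\beta_2+\alpha_2\beta_1$. Since the even neighbours of $a$ are exactly the frozen $b,c$, condition (C1) holds at $a$; then by the even-mutation Laurent phenomenon established above $\mu_a^2(a)=a$, the even cluster variables are $a,d:=\mu_a(a)$ together with the frozen $b,c,e_{ij}$, and setting $d=\mu_a(a)$ reproduces the symplectic relation exactly as in Theorem \ref{osp1}. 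The odd mutations \eqref{oddmutation} then generate the odd variables $a^i\alpha_j,\,a^i\beta_j$ as before.

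Third, I would build the quotient map. Define $\phi:\mathcal C_{\mathbb K}(X|Y,Q)\to \mathcal O(SpO(2|2))$ on generators by sending each cluster variable to the matching matrix entry. It is well defined because the only relations imposed in the cluster superalgebra, namely the symplectic relation (built into $\mu_a$) and the Grassmann relations, already hold in $\mathcal O(SpO(2|2))$; and it is surjective because $\gamma_i,\delta_i$ are, by the linear $(1,2)$- and $(2,1)$-blocks, polynomial expressions in the images of the generators. Hence $\mathcal O(SpO(2|2))\cong \mathcal C_{\mathbb K}(X|Y,Q)/\ker\phi$, which is exactly the assertion that $SpO(2|2)$ is a quotient of a cluster superalgebra.

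The hard part is $\ker\phi$, i.e.\ explaining why one genuinely gets a quotient and not a cluster superalgebra structure as in Theorem \ref{osp1}. For $SpO(2|1)$ the orthogonal group $O(1)=\{\pm1\}$ is finite, so the single entry $e$ is forced to equal $1+\alpha\beta$ and is eliminated; here $O(2)$ is one-dimensional, so the block $D$ carries a genuine even parameter that is not a polynomial in $a,b,c,d,\alpha_i,\beta_i$, which is precisely why the $e_{ij}$ must be adjoined as extra frozen variables. The orthogonality relation from the $(2,2)$-block and the compatibility relation from the $(2,1)$-block are produced by no mutation, so I expect the main labour to lie in verifying that the ideal they generate is exactly $\ker\phi$, that is, that $M^{st}J_{1,2}M=J_{1,2}$ forces no further hidden relations among the chosen generators.
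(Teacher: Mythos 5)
Your treatment of the symplectic block is essentially the paper's: same seed $\{a,b,c\,|\,\alpha_1,\alpha_2,\beta_1,\beta_2\}$ with $b,c$ frozen, same use of odd arrows between the ``diagonal'' pairs to kill the terms $\alpha_i\beta_i$ in \eqref{evenmutation}, and the single mutation $\mu_a(a)=d$ recovering $ad=1+bc-\alpha_1\beta_2-\alpha_2\beta_1$. Where you genuinely diverge is the orthogonal block. The paper does \emph{not} freeze all of $D$: it takes $X=\{a,b,c,e_1,e_2,e_3\}$ with $e_1$ mutable, includes all eight odd entries (so $\gamma_i,\delta_i$ are frozen odd cluster variables), and attaches a second superquiver component in which $\delta_2$ carries a loop; the sign $(-1)^v$ in \eqref{evenmutation} then produces $\mu_{e_1}(e_1)=\frac{1}{e_1}\bigl[1-e_2e_3+\delta_2\gamma_1+\delta_1\gamma_2\bigr]$, so that setting $e_4=\mu_{e_1}(e_1)$ realizes the relation $e_1e_4+e_2e_3=1-\gamma_1\delta_2-\gamma_2\delta_1$ as a genuine exchange relation. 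Your version buys a smaller seed and trivial verification of (C1), but it realizes only one of the defining relations through mutation, which loses the main point of the example; and it shifts work onto surjectivity, since $\gamma_i,\delta_i$ are no longer generators. That step is only sketched in your write-up and deserves care: solving the linear block for $(\gamma_1,\delta_1)$ requires inverting the matrix with determinant $ad-bc=1-\alpha_1\beta_2-\alpha_2\beta_1$, and you must observe that this is $1$ plus a nilpotent element of the coordinate superalgebra, so its inverse is a polynomial (the geometric series terminates because any product of five of the four odd generators $\alpha_i,\beta_i$ vanishes). With that observation supplied, your argument closes; in the paper surjectivity is immediate because every matrix entry is itself a cluster variable. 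Finally, note that the paper claims nothing about $\ker\phi$, so your last paragraph is answering a question the theorem does not ask.
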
 

\begin{proof} 
The symplectic-orthogonal supergroup $SpO(2|2)$ over a supercommutative superalgebra $A=A_0\oplus A_1$ consists of supermatrices

\medskip

$$M=\left(\begin{array}{cccc}
a & b & \gamma_1 &\gamma_2\\
c & d & \delta_1 & \delta_2\\
\alpha_1 & \beta_1 & e_1 & e_2\\
\alpha_2 & \beta_2 & e_3 & e_4
\end{array}\right)$$ such that $sdet(M)=1$ and $$M^{st}J_{1,2}M=J_{1,2}.$$

 After working out the computational details as in the proof of the previous theorem, the above condition gives us the following set of equations 

\bigskip

\begin{equation} \label{OSpRule22}
\begin{array}{l}
ad=1+bc-\alpha_1 \beta_2-\alpha_2\beta_1\\
e_1e_4+e_2e_3=1-\gamma_1\delta_2-\gamma_2\delta_1\\
e_1e_3=-\gamma_1\delta_1\\
e_2e_4=-\gamma_2\delta_2\\
-c\gamma_1+a\delta_1=e_1\alpha_2+e_3\alpha_1\\
-c\gamma_2+a\delta_2=e_2\alpha_2+e_4\alpha_1\\
-d\gamma_1+b\delta_1=e_1\beta_2+e_3\beta_1\\
-d\gamma_2+b\delta_2=e_2\beta_2+e_4\beta_1
\end{array}
\end{equation}

\bigskip

\noindent Choose $a, b, c, e_1, e_2, e_3\in A_{0}$ and $\alpha_1, \alpha_2, \beta_1, \beta_2, \gamma_1, \gamma_2, \delta_1, \delta_2 \in A_{1}$. 

\bigskip

\noindent Consider the initial seed $(X|Y)$ with $X=\{a, b, c, e_1, e_2, e_3\}$ where $b, c, e_2, e_3$ are frozen and $Y=\{\alpha_1, \alpha_2, \beta_1, \beta_2, \gamma_1, \gamma_2, \delta_1, \delta_2\}$ and consider the following superquiver:

\newpage 

$$
\begin{array}{ccccccc}
 \xymatrix{
&& \beta_1\ar@{->}[rdd]\ar@{->}[rr] &&
 \alpha_1\ar@{<-}[ldd]&&\\
& \beta_2\ar@{->}[rrd]\ar@{->}[rrrr]  &&&& \alpha_2\ar@{<-}[lld]\\
{\color{blue} b}\ar@{<-}[rrr]&& &a\ar@{->}[rrr]&&  &{\color{blue} c}&
}
\end{array}
$$

$$
\begin{array}{ccccccc}
 \xymatrix{
{\color{blue}e_2}\ar@{<-}[rrr]\ar@{<-}[rd]&&& e_1\ar@{->}[rrr]\ar@{<-}[lld]\ar@{<-}[ldd]\ar@{->}[rrd]\ar@{->}[rdd]&
&&{\color{blue}e_3}\\
&\delta_2\ar@{->}[rrrr]\ar@(dl,dr)_{\id} &&&& \gamma_2\\
&& \delta_1 \ar@{->}[rr] && \gamma_1&  &&}\\
\end{array}
$$

\noindent Note that $Q$ satisfies the restriction (C1) but not (C2); there are four paths of the form $y_i\to x_k\to y_j$ without arrows from $y_i$ to $y_j$: $\beta_2\to a\to\alpha_1$, $\beta_1\to a\to\alpha_2$, $\delta_2\to e_1\to\gamma_1$, $\delta_1\to e_1\to\gamma_2$,  and $a, e_1$ are not adjacent to any mutable even vertex (since $b,c,e_2,e_3$ are not mutable). 
So mutating along the directions of vertices $a$ and $e_1$, we get
\begin{equation}
\mu_{a}(a)=\frac{1}{a}[bc+1+ \beta_2\alpha_1+\beta_1\alpha_2]
\end{equation}
\begin{equation}
\mu_{e_1}(e_1)=\frac{1}{e_1}[1-e_2e_3+ \delta_2\gamma_1+\delta_1\gamma_2]
\end{equation}
where the last two terms of both come from the summation of the last formula in \eqref{evenmutation}.

Setting $\mu_a(a)=d$ and $\mu_{e_1}(e_1)=e_4$, we get the first two equations of \eqref{OSpRule22}. 

\noindent This shows that the supergroup $SpO(2|2)$ is a quotient of the cluster superalgebra $C_\mathbb{K}(X|Y, Q)$.
\end{proof}

\subsection{Super Grassmannian}

\bigskip

\noindent The super Grassmannian $G(r|s; m|n)$ is the supermanifold of $(r, s)$-dimensional superverctor subspaces $U$ of a given $(m, n)$-dimensional supervector space $V=V_0\oplus V_1$, that is, $U\subset V$, $dim(V_0)=m$, $dim(V_1)=n$, $dim(U\cap V_0)=r$ and $dim(U\cap V_1)=s$. 

The supervariety of super Grassmannian is, in general, not a projective supervariety, contrary to the classical setting. However, the particular supervariety of super Grassmannian $G(2|0; 4|1)$ of $(2|0)$ planes inside the superspace $\mathbb C^{4|1}$ is a projective supervariety. This is the only example we could find in literature where the supersymmetric analogue of Pl\"{u}cker embedding and the complete description of the coordinate superalgebra $O(G(2|0; 4|1))$ given in terms of generators and relations is available. Interested readers may see the original source \cite{CFL} for more details but for the sake of completeness we give relevant details from \cite{CFL} below.

We will denote the super Grassmannian $G(2|0; 4|1)$ with $Gr$ and realize it via its functor of points, say $h$. For a generic superalgebra $A,$ the $A$-points of $Gr$ consists of the projective modules of rank $(2|0)$ in $A^{4|1}:=A\otimes \mathbb{C}^{4|1}.$ However, we may consider $A$ to be a local superalgebra. For a local superalgebra $A,$ $h_{Gr}(A)$ consists of free submodules of rank $(2|0)$ in $A^{4|1}$ as projective modules over a local superalgebra are free. Consider the canonical basis $\{a_1,a_2,a_3,a_4,\gamma_1\}$ for $A^{4|1}$.  

A free submodule of rank $(2|0)$ in $A^{4|1}$ can be specified by independent vectors $\mathbf{u}$ and $\mathbf{v},$ which in the canonical basis $\{a_1,a_2,a_3,a_4,\gamma_1\}$ are given by two columns vectors that span the subspace 
$$U=span\{\mathbf{u},\mathbf{v}\}$$

It turns out that $Gr$ is a projective supervariety with an embedding into projective superspace given by the transformation among the functors 
$$P: h_{Gr}\longrightarrow h_{P(E)}$$
where $E$ is the superspace $$E=\bigwedge\nolimits^2\mathbb{C}^{4|1}\approx \mathbb{C}^{7|4}.$$
We have a basis for $E$ given as 
$$\{a_1\wedge a_2, a_1\wedge a_3, a_1\wedge a_4, a_2\wedge a_3, a_2\wedge a_4, a_3\wedge a_4, \gamma_1\wedge \gamma_1, a_1\wedge \gamma_1, a_2\wedge \gamma_1, a_3\wedge \gamma_1, a_4\wedge \gamma_1\}$$

$$h_{Gr}(A)\xlongrightarrow{\text \ \ \ P_A\ \ \ } h_{P(E)}(A)$$
$$span_A\{\mathbf{u},\mathbf{v}\}\longmapsto span_A\{\mathbf{u}\wedge \mathbf{v}\}$$

Note that if $U$ is a free module of rank $(2|0),$ then $\bigwedge\nolimits^2 U$ is a free module of rank $1|0,$ and so it is an element of $h_{P(E)}(A).$

The image of $P_A(h_{Gr}(A))$ is the subset of even elements in $h_{P(E)}(A)$ decomposable in terms of two even vectors of $A^{4|1}$. Consider an even element $P\in h_{P(E)}(A)$. Then we have $P=p+\lambda \wedge \gamma_1 +a_{11} \gamma_1 \wedge \gamma_1$, with $p=p_{12} a_1\wedge a_2 + \cdots + p_{34} a_3 \wedge a_4$ and $\lambda=\lambda_1 a_1 + \cdots + \lambda_4 a_4$ with $p_{ij}, a_{11} \in A_0$ and $\lambda_i\in A_1$. Clearly $P$ is decomposable if and only if $P=(r+ \alpha_1 \gamma_1) \wedge (s+ \beta_1 \gamma_1)$ with $r=r_1 a_1 + \cdots + r_4a_4$ and $s=s_1a_1 + \cdots + s_4a_4$ where $r_i, s_i \in A_0$ and $\alpha_1, \beta_1\in A_1$. This gives us $p\wedge p=0$, $p\wedge \lambda=0$, $\lambda \wedge \lambda=2a_{11}p$ and $\lambda a_{11}=0$. After the simplifications, we have the following super Pl\"{u}cker relations; 

$$p_{12}p_{34}-p_{13}p_{24}+p_{14}p_{23}=0$$
$$p_{ij}\lambda_k-p_{ik}\lambda_j+p_{jk}\lambda_i=0, \text{\ } 1\leq i<j<k\leq 4$$
$$\lambda_i\lambda_j=a_{11}p_{ij}, \text{\ } 1\leq i<j\leq 4$$
$$\lambda_i a_{11}=0$$

\noindent Consider the initial seed $(X|Y)$, where $X=\{p_{12}, p_{14}, p_{23}, p_{24}, p_{34}, a_{11}\}$ with $p_{24}$ being mutable and others frozen; $Y=\{\lambda_1, \lambda_2, \lambda_4\}$ with $\lambda_2$ being mutable and others frozen. 

Consider the following superquiver $Q$:
$$
\begin{array}{ccccccc}
\xymatrix{
& \color{blue}{p_{23}}\ar@{->}[rd]  &&
\color{blue}{p_{34}}\ar@{<-}[ld]\ar@{->}[r]& \color{blue}{a_{11}} & \\
\color{blue}{p_{14}}\ar@{->}[rr]\ar@/^/[d]&& p_{24}\ar@{->}[rr]\ar@{->}[lld]&& \color{blue}{p_{12}}\ar@{->}[d] \ar@/_1pc/[llll]&&\\
\lambda_{2}\ar@{->}[rrrr]\ar@/^/[u]\ar@{->}[rrrru]&& && \lambda_{4} &&\\
& \lambda_{1}\ar@{->}[lu] \ar@{->}[ruu]&& &&\\
}
\end{array}
$$
Note that $Q$ vacuously satisfies the restriction (C1) and (C2) because there are no paths of the form $y_i\to x_k\to y_j$ with $x_k$ mutable and without arrows from $y_i$ to $y_j$.

Mutating at even vertex $p_{24}$, we get $$\mu_{p_{24}}(p_{24})=\frac{1}{p_{24}}[p_{12}p_{34}+p_{23}p_{14}].$$ Setting $\mu_{p_{24}}(p_{24})=p_{13}$, we get $$p_{12}p_{34}-p_{13}p_{24}+p_{14}p_{23}=0.$$ Mutating at odd vertex $\lambda_2$, we get $$\eta_2(\lambda_2)=\frac{1}{p_{14}}[\lambda_4 p_{12}+\lambda_1 p_{24}].$$ Setting $\eta_2(\lambda_2)=\lambda_3$, we get $$p_{12}\lambda_4-p_{14}\lambda_3+p_{24}\lambda_1=0.$$ Thus, we have

\begin{thm}
The coordinate superalgebra of the super Grassmannian $G(2|0; 4|1)$ is the quotient of the cluster superalgebra $C_{\mathbb K}(X|Y, Q)$. More precisely, the coordinate superalgebra of the super Grassmannian $G(2|0; 4|1)$ is $C_{\mathbb K}(X|Y, Q)/\mathcal{I}$ where 
\begin{eqnarray*}
	\mathcal{I}&=& < \lambda_1\lambda_2-a_{11}p_{12},\lambda_1\lambda_3-a_{11}p_{13},\lambda_1\lambda_4-a_{11}p_{14},\\
	&& \lambda_2\lambda_3-a_{11}p_{23},\lambda_2\lambda_4-a_{11}p_{24},\lambda_3\lambda_4-a_{11}p_{34},\\
	&& \lambda_1a_{11}, \lambda_2a_{11}, \lambda_3a_{11}, \lambda_4a_{11} >.
\end{eqnarray*}
\end{thm}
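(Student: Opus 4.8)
The plan is to realize $O(G(2|0;4|1))$ as $C_{\mathbb K}(X|Y,Q)/\mathcal I$ through the map that is the identity on the Plücker-type generators, and to reduce the theorem to an equality of ideals. Write $\mathcal A$ for the free supercommutative $\mathbb K$-superalgebra on the even symbols $q_{ij}$ ($1\le i<j\le 4$), $a_{55}$ and the odd symbols $\lambda_1,\dots,\lambda_4$, so that by the presentation recalled above $O(G(2|0;4|1))=\mathcal A/\mathcal I_P$. If I can show that $C_{\mathbb K}(X|Y,Q)=\mathcal A/J$ for an explicit ideal $J$, then the theorem becomes the single statement $J+\mathcal I=\mathcal I_P$.

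First I would determine all cluster variables. On the even side $q_{24}$ is the only exchangeable vertex, so involutivity of even mutation ($\mu_k^2=\mathrm{id}$) shows that the sole even variable beyond the initial six is $q_{13}:=\mu_{q_{24}}(q_{24})=\tfrac{1}{q_{24}}(q_{12}q_{34}+q_{14}q_{23})$; hence $\mathcal X_{even}=\{q_{12},q_{13},q_{14},q_{23},q_{24},q_{34},a_{55}\}$. On the odd side $\lambda_2$ is the only exchangeable vertex, so the relations $\eta_k^3=\eta_k$ together with the fact that every element of $\eta_k^2(Y)$ is generated by $X$, $Y$ and $\eta_k(Y)$ force the only new odd generator to be $\lambda_3:=\eta_2(\lambda_2)=\tfrac{1}{q_{14}}(q_{12}\lambda_4+q_{24}\lambda_1)$. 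Thus $C_{\mathbb K}(X|Y,Q)$ is generated by these eleven elements, and by the even Laurent phenomenon proved above it embeds into $\mathbb K[q_{12}^{\pm1},q_{14}^{\pm1},q_{23}^{\pm1},q_{24}^{\pm1},q_{34}^{\pm1},a_{55}^{\pm1}]\otimes\mathbb K[\lambda_1,\lambda_2,\lambda_4]$, which I would use to certify that no relations among the generators have been overlooked.

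Next I would read off the relations. The even mutation gives $q_{13}q_{24}=q_{12}q_{34}+q_{14}q_{23}$, i.e. the classical Plücker relation $P:=q_{12}q_{34}-q_{13}q_{24}+q_{14}q_{23}$, and the odd mutation gives $q_{14}\lambda_3=q_{12}\lambda_4+q_{24}\lambda_1$; supercommutativity supplies $\lambda_i\lambda_j=-\lambda_j\lambda_i$ (so $\lambda_i^2=0$ and $\lambda_3^2=0$ automatically). This identifies $J$ as generated by $P$ and by $\tilde R:=q_{12}\lambda_4-q_{14}\lambda_3+q_{24}\lambda_1$. Passing to the quotient by $\mathcal I$ then imposes precisely the two remaining families $\lambda_i\lambda_j-a_{55}q_{ij}$ and $\lambda_ia_{55}$, which are exactly the super Plücker relations that never appear as the numerator of any even or odd mutation.

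The real content is the ideal identity $J+\mathcal I=\mathcal I_P$, and the delicate point is the $q\lambda$-relations. The single mutation relation $\tilde R$ and the super Plücker relation $q_{12}\lambda_4-q_{14}\lambda_2+q_{24}\lambda_1$ for the index set $\{1,2,4\}$ differ by $q_{14}(\lambda_2-\lambda_3)$, so the crux is to verify that $q_{14}(\lambda_2-\lambda_3)\in\mathcal I_P$ — equivalently that $q_{14}\lambda_2$ and $q_{14}\lambda_3$ coincide in the quotient — and then to recover all four relations $q_{ij}\lambda_k-q_{ik}\lambda_j+q_{jk}\lambda_i$ from $\tilde R$ by multiplying by suitable $q_{ij}$ and $\lambda_i$ and collapsing the resulting cubic odd monomials through the relations $\lambda_i\lambda_j=a_{55}q_{ij}$ and $\lambda_ia_{55}=0$. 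Checking this compatibility, together with the injectivity of the identity-on-generators map (so that no relations beyond $\mathcal I_P$ are introduced), is where I expect the genuine difficulty to lie; by contrast the two mutation computations and the enumeration of cluster variables are routine.
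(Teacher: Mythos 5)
Your computational core coincides exactly with the paper's argument: the same seed and superquiver, the single even mutation producing $q_{13}$ together with the classical Pl\"ucker relation, the single odd mutation producing $\lambda_3$ together with the linear relation $q_{12}\lambda_4-q_{14}\lambda_3+q_{24}\lambda_1=0$, and the observation that the remaining super Pl\"ucker relations are exactly the generators of $\mathcal I$. Where you differ is that you correctly identify what a complete proof would still require --- the ideal identity $J+\mathcal I=\mathcal I_P$, the recovery of all four relations $q_{ij}\lambda_k-q_{ik}\lambda_j+q_{jk}\lambda_i$, and the injectivity of the identity-on-generators map --- and then you stop, declaring these to be ``where the genuine difficulty lies.'' That is a gap in your proposal: the theorem is precisely the assertion that these verifications succeed, so deferring them leaves the statement unproved. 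Be aware, though, that the paper's own proof consists solely of the two mutation computations followed by ``Thus, we have'' the theorem; it performs none of the checks you list, so the gap you have isolated is the paper's as much as yours.

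Moreover, the specific compatibility you single out --- whether $q_{14}(\lambda_2-\lambda_3)\in\mathcal I_P$ --- does not look like a routine verification; it looks false. The super Pl\"ucker relation for the index triple $(1,2,4)$ reads $q_{12}\lambda_4-q_{14}\lambda_2+q_{24}\lambda_1=0$, so in $O(G(2|0;4|1))$ one has $\tfrac{1}{q_{14}}\left(q_{12}\lambda_4+q_{24}\lambda_1\right)=\lambda_2$ wherever $q_{14}$ is invertible; the odd mutation therefore returns $\lambda_2$ itself rather than a genuinely new coordinate, and no generator of $\mathcal I_P$ forces $q_{14}\lambda_2=q_{14}\lambda_3$. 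Relatedly, multiplying the mutation relation $q_{14}\lambda_3=q_{12}\lambda_4+q_{24}\lambda_1$ by $\lambda_2$ and reducing modulo $\mathcal I$ yields $a_{55}q_{14}q_{23}=0$ in $C_{\mathbb K}(X|Y,Q)/\mathcal I$, a relation that is not visibly a consequence of $\mathcal I_P$. So before investing effort in the ideal-theoretic bookkeeping you outline, you should either locate a different identification of $\eta_2(\lambda_2)$ with a coordinate of the super Grassmannian (for instance via the relation $q_{13}\lambda_4-q_{14}\lambda_3+q_{34}\lambda_1=0$, which does express $\lambda_3$ as $\tfrac{1}{q_{14}}\left(q_{13}\lambda_4+q_{34}\lambda_1\right)$, albeit in terms of the non-initial variable $q_{13}$) or conclude that the statement needs repair; your plan of ``multiplying by suitable $q_{ij}$ and $\lambda_i$ and collapsing'' will not get around this obstruction.
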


\noindent Note that it is not much of a setback not being able to obtain the relations in $\mathcal{I}$ as these are not in the spirit of mutation in classical cluster algebras.

\bigskip

\section{Finite mutation type superquiver}

\noindent In the theory of cluster algebras, a quiver $Q$ is said to be of {\it finite mutation type} if its mutation-equivalence class is finite. It is known \cite{FST} that a connected quiver $Q$ with at least three vertices is of finite mutation type if and only if it comes from the triangulation of a surface or it is mutation-equivalent to one of the exceptional types $E_6, E_7, E_8, E_6^{(1)}, E_7^{(1)}, E_8^{(1)}, E_6^{(1,1)}, E_7^{(1,1)}, E_8^{(1,1)}, X_6, X_7$. 

We adapt the notion of mutation-equivalence for classical cluster algebras in the case of cluster superalgebras as follows:

\begin{dfn}
A superquiver $Q'$ is said to be {\it mutation-equivalent} to another superquiver $Q$ if there exists mutations $\sigma_1,\ldots, \sigma_r$ with each $\sigma_i$ being either an even mutation or an odd mutation such that $\sigma_r\circ\cdots\circ\sigma_1(Q)=Q'.$ As in the case of classical cluster algebras, we will denote by $Mut(Q)$ the set of all superquivers mutation-equivalent to a superquiver $Q.$
\end{dfn}

\begin{dfn}
A superquiver $Q$ is said to be of finite mutation type if the number of superquivers $Q'$ that are mutation-equivalent to $Q$ is finite.
\end{dfn}

In the theorem below, we characterize superquivers that are of finite mutation type. 

\begin{thm}
	Let $Q$ be a superquiver and denote by $Q_X$ the full subquiver of $Q$ obtained by removing all odd vertices of $Q$ and let $Q_Y$ be the full subquiver of $Q$ obtained by removing all even vertices of $Q.$ Then $Q$ is of finite mutation type if and only if $Q_X$ and $Q_Y$ are of finite mutation type in the classical sense.
\end{thm}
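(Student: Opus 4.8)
The plan is to show that the even-even, odd-odd, and mixed (even-odd) arrows of a superquiver evolve essentially independently under mutation, and then to reduce the finiteness statement to the elementary fact that, on a fixed finite vertex set, a mutation class is finite precisely when the arrow multiplicities between every pair of vertices stay bounded.

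First I would isolate the structural observation that drives everything. Inspecting the quiver rules for even mutation, rule (i) only creates arrows $x_i\to x_j$ between even vertices, rule (ii) only reverses the arrows joining $x_k$ to other even vertices, and rule (iii) only cancels even-even $2$-cycles; hence $\mu_k$ restricted to the even vertices is exactly the classical Fomin--Zelevinsky mutation of $Q_X$ at $x_k$ (as in the Remark, on a quiver without odd vertices even mutation is the classical one), while it leaves the odd-odd arrows and every mixed arrow completely untouched. Dually, the rules for odd mutation create and cancel only odd-odd arrows and reverse the arrows incident to $y_i$; thus $\eta_i$ restricted to the odd vertices is the classical mutation of $Q_Y$ at $y_i$, it fixes $Q_X$, and on the mixed arrows it merely reverses those incident to $y_i$, preserving the number of arrows joining each even vertex to each odd vertex. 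Consequently, for any mutation word $\sigma=\sigma_r\circ\cdots\circ\sigma_1$, the even-even part of $\sigma(Q)$ is obtained by applying the subsequence of even mutations of $\sigma$ to $Q_X$ in the classical sense (the interspersed odd mutations being invisible to $Q_X$), and symmetrically for the odd-odd part; moreover the multiplicity of mixed arrows between a fixed even vertex and a fixed odd vertex is a mutation invariant.

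For the forward implication this gives well-defined surjections $Mut(Q)\to Mut(Q_X)$ and $Mut(Q)\to Mut(Q_Y)$ sending a superquiver to its even full subquiver, respectively its odd full subquiver; surjectivity holds because every classical mutation sequence of $Q_X$ is realized by the corresponding sequence of even mutations of $Q$, and likewise for $Q_Y$ with odd mutations. Hence $|Mut(Q_X)|\le |Mut(Q)|$ and $|Mut(Q_Y)|\le |Mut(Q)|$, so finiteness of $Mut(Q)$ forces both $Q_X$ and $Q_Y$ to be of finite mutation type. For the converse, a superquiver on the fixed vertex set of $Q$ is completely determined by its even-even arrows, its odd-odd arrows, and its mixed arrows. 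If $Q_X$ and $Q_Y$ are of finite mutation type, then the first datum ranges over the finite set $Mut(Q_X)$ and the second over the finite set $Mut(Q_Y)$; by the invariance above the number of mixed arrows between each even-odd pair is constant along $Mut(Q)$, and since such arrows can only be reversed there are at most two configurations for each pair. Thus the map $Q'\mapsto(Q'_X,Q'_Y,\text{mixed}(Q'))$ injects $Mut(Q)$ into a finite product, and $Mut(Q)$ is finite.

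I expect the genuine work to be the structural lemma of the second step, specifically the verification that neither mutation ever produces an arrow between vertices of opposite parity, nor an odd-odd arrow out of a path $y\to x_k\to y'$, so that the three parts really do decouple; this is exactly the feature of the definitions that distinguishes the present notion from a naive application of classical mutation. Two minor points also need attention: loops on odd vertices are fixed by every mutation and hence inert for finiteness, so one may equivalently delete them from $Q_Y$; and one must interpret finite mutation type in the classical sense for $Q_X$ and $Q_Y$ consistently with the treatment of frozen vertices, so that the mutable even (resp. odd) vertices of $Q$ match those used in $Mut(Q_X)$ (resp. $Mut(Q_Y)$). Neither affects the counting argument, which ultimately rests only on the observation that a mutation class on a fixed finite vertex set is finite if and only if all arrow multiplicities remain bounded.
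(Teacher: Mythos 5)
Your proposal is correct and follows essentially the same route as the paper: the decoupling of even--even, odd--odd, and mixed arrows under the two mutations, the resulting comparison of $Mut(Q)$ with $Mut(Q_X)$ and $Mut(Q_Y)$ for the forward direction, and the observation that mixed arrows between a fixed even--odd pair are only ever reversed (so admit finitely many configurations) for the converse. The paper packages this last point as four cases for the arrows between $x_k$ and $y_j$ and bounds $|Mut(Q)|$ by $rs\cdot 4^{mn}$, which is just a coarser version of your injection into a finite product; your explicit remarks about loops on odd vertices and about the surjectivity of the restriction maps are minor refinements, not a different argument.
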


\begin{proof}
	First note that on $Q_X$ and $Q_Y,$ our notion of superquiver mutation coincides with the classical quiver mutation. Clearly $|Mut(Q)|\geq |Mut(Q_X)|\cdot |Mut(Q_Y)|$  because the natural map 
$Mut(Q)\to Mut(Q_X)\times Mut(Q_Y)$
is surjective: for any $Q_1=\mu_{i_r}\dots \mu_{i_1}(Q_X)$, $Q_2=\eta_{j_s}\dots \eta_{j_1}(Q_Y)$, the quiver $\mu_{i_r}\dots \mu_{i_1}\eta_{j_s}\dots \eta_{j_1}(Q)$ in $Mut(Q)$ restricts to $Q_1$ in $Mut(Q_X)$ and restricts to $Q_2$ in $Mut(Q_Y)$). So if $Q$ is of finite mutation type then $Q_X$ and $Q_Y$ are of finite mutation type. Now suppose $Q_X$ and $Q_Y$ are of finite mutation type. Let $|X|=m,$ $|Y|=n,$ $|Mut(Q_X)|=r,$ and $|Mut(Q_Y)|=s.$ In $Q,$ each vertex in $Q_Y$ can have an arrow between at most $m$ vertices in $Q_X.$ There are four possible cases of arrows between a vertex $x_k$ in $Q_X$ and a vertex $y_j$ in $Q_Y$ in $Q:$
	\begin{enumerate}
		\item There are no arrows between $x_k$ and $y_j.$
		\item There are an equal number of arrows $x_k\to y_j$ as there are $y_j\to x_k.$
		\item There are $a$ number of arrows $x_k\to y_j$ and $b$ number of arrows $y_j\to x_k$ with $a>b.$
		\item There are $a$ number of arrows $x_k\to y_j$ and $b$ number of arrows $y_j\to x_k$ with $a<b.$
	\end{enumerate}
	Applying mutation to an odd vertex satisfying (1) maintains condition (1) on that odd vertex and applying mutation to an odd vertex satisfying (2) maintains condition (2) on that odd vertex. Applying mutation to an odd vertex satisfying (3) changes it to satisfying condition (4) on that odd vertex with $a$ becoming $b$ and $b$ becoming $a.$ Applying mutation to an odd vertex satisfying (4) changes it to satisfying condition (3) on that odd vertex with $a$ becoming $b$ and $b$ becoming $a.$ Thus $(1)-(4)$ encompass all possibilities of connections between an even and odd vertex after applying mutation. Since for each of the $r$ possible quivers in $Mut(Q_x)$ we can have $s$ possible orientations for $Q_Y$ and there are $4$ possible ways an even and odd vertex can have arrows between then and each of the $m$ even vertices can have arrows between at most $n$ odd vertices, we have that $|Mut(Q)|< rs\cdot 4^{mn}.$ Thus $Q$ is of finite mutation type.
\end{proof}

\noindent Note the strict inequality due to it being impossible for a connection between a vertex in $Q_X$ and a vertex in $Q_Y$ to attain every state $(1)-(4)$ through mutation. As previously discussed, the only time a connection between a vertex in $Q_X$ and a vertex in $Q_Y$ can change is when mutating at an odd vertex satisfying $(3)$ or $(4).$ Thus if $Q_X$ and $Q_Y$ are finite mutation type, we can refine the bound on the size of the mutation equivalence class as $|Mut(Q)|\leq rs\cdot 2^{n}$ since for each of the $n$ odd variables $y_j$ there are two possible configurations for the set of arrows between $y_j$ and even variables.

\bigskip

\begin{dfn}
A cluster superalgebra $\mathcal C_\mathbb{K}(X|Y, Q)$ is said to be of finite type if the number of supercluster variables is finite.
\end{dfn}

\bigskip

\noindent Cluster algebras of finite type are completely characterized in \cite{FZ2} and their classification, quite surprisingly, turns out to be identical to the Cartan-Killing classification of semisimple Lie algebras and finite root systems. It is known that a cluster algebra $\mathcal A(X, Q)$ is of finite type if and only if $Q$ is mutation equivalent to Dynkin diagram of type $A$, $D$ or $E$.

\bigskip

\noindent We propose the following problem for further development of the notion of cluster superalgebras.  

\begin{prob}
Characterize cluster superalgebras that are of finite type. 
\end{prob}

\bigskip

\section{Limitations and extensions}\label{section:limitations and extensions}

\bigskip

\noindent  One of the limitations of our study is that we have considered only those superquivers in this paper that satisfy at least one of the conditions (C1) and (C2). The reason behind imposing this restriction is that if we consider an arbitrary superquiver and follow our even and odd mutations, then the Laurent phenomenon fails to hold. We illustrate this in the example below where conditions (C1) and (C2) are both violated. Consider the initial seed $(X|Y, Q)$ with $X=\{x_1, x_2\}$, $Y=\{y_1, y_2\}$ with $x_1$ and $x_2$ mutable and superquiver $Q$ given as:

\begin{equation}
\begin{array}{ccc}
  \xymatrix{ 
 x_1\ar@{->}[rr]&&
x_2\ar@<0pt>@{<-}[lld]\ar@<-3pt>@{->}[d]\\
 y_1&& y_2
}
\end{array}
\end{equation}

It may be checked that  \[\mu_1\circ\mu_2\circ \mu_1(x_1)=\frac{1+x_2+x_1(1+y_1y_2)+x_1x_2}{x_2(1+x_2)}\] 

Clearly $\mu_1\circ\mu_2\circ \mu_1(x_1)$ is not a Laurent polynomial in initial cluster variables $x_1, x_2, y_1, y_2$ and this shows that the Laurent phenomenon fails to hold in the case.

To be able to extend this study to any superquiver, one needs some kind of control over sequence of even mutations. For the rest of the paper we are going to consider arbitrary superquivers but we put a restriction on the sequence of even mutations as follows:

\smallskip

{(C3)} To generate even variables, we allow only those sequence of even mutations where no two mutations are in the same direction.

\smallskip

Under condition (C3), our definition of cluster superalgebras clearly extends to arbitrary superquivers and the Laurent phenomenon holds in this case. As a consequence of this extension, we are able to show that the supercommutative superalgebra generated by all the entries of a superfrieze is a subalgebra of a cluster superalgebra. 
\noindent \begin{dfn} (see  Morier-Genoud, Ovsienko, and Tabachnikov \cite[\S2.2]{MOT} or Ovsienko \cite[\S5.2]{Ovsienko})
A superfrieze of width $n$ over a superring $\mathcal R=\mathcal R_{0} \oplus \mathcal R_{1}$ is an array of elements in $\mathcal R$ arranged as follows:

{\small 
\begin{equation}\label{fig:superfrieze}
\begin{array}{ccccccccccccccccccccccccc}
&\ldots&0&&&&0&&&&0\\[10pt]
\ldots& 0 && 0 && 0
&& 0 && 0 &&\ldots\\[10pt]
\;\;\;1&&&&1&&&&1&&&\ldots\\[10pt]
&\beta_{0,0}&& \boxed{\beta_{\frac{1}{2},\frac{1}{2}}} && \beta_{1,1}
&& \beta_{\frac{3}{2},\frac{3}{2}}&& \beta_{2,2}&&\ldots\\[12pt]
&&\boxed{a_{0,0}}&&&&a_{1,1}&&&&a_{2,2}\\[10pt]
&\beta_{-\frac{1}{2},\frac{1}{2}}&& \beta_{0,1}
&& \boxed{\beta_{\frac{1}{2},\frac{3}{2}}}
&&\beta_{1,2}&& \beta_{\frac{3}{2},\frac{5}{2}}&&\ldots\\[10pt]
a_{-1,0}&&&&\boxed{a_{0,1}}&&&&a_{1,2}&&\\[4pt]
&\iddots&&\iddots&& \ddots&&\ddots&& \ddots&&\!\!\!\ddots\\[4pt]
&&a_{2-n,1}&&&&\boxed{a_{0,n-1}}&&&&a_{1,n}&&&&\\[10pt]
\ldots& \beta_{\frac{3}{2}-n,\frac{3}{2}}&& \beta_{2-n,2}&&\ldots
&& \beta_{0,n}&& \boxed{\beta_{\frac{1}{2},n+\frac{1}{2}}}&& \beta_{1,n+1}\\[10pt]
\;\;\;1&&&&1&&&&1&&&&&\\[10pt]
\ldots&0 && 0 && 0
&& 0 && 0 &&0 &\\[10pt]
&\ldots&0&&&&0&&&&0&\ldots
\end{array}
\end{equation}}
such that, $a_{i,j}\in\mathcal{R}_0$, $\beta_{i,j}\in\mathcal{R}_1$, and elements in each diamond shape
$$
\begin{array}{ccccc}
&&B&&\\[4pt]
&\alpha && \beta &\\[4pt]
A&&&&D\\[4pt]
&\gamma &&\delta &\\[4pt]
&&C&&
\end{array}
$$
satisfy 
\begin{equation}
\label{Rule}
\begin{array}{rcl}
AD-BC&=&1+\delta \alpha,\\[4pt]
B\gamma-A\beta&=&\alpha,\\[4pt]
B\delta-D\alpha&=&\beta.
\end{array}
\end{equation}
\end{dfn}

Consider the following extension of the standard Dynkin quiver of type $A_n$ with initial even variables $X=\{x_1,\ldots, x_n\}$ and odd variables $Y=\{y_1,\ldots, y_{n+1}\}:$

\begin{equation}\label{superan}
	\xymatrix{ 
		 y_1\ar@<0pt>@{<-}[rd]&&
		 y_2\ar@<0pt>@{->}[ld]\ar@<0pt>@{<-}[rd]&&
		 y_3\ar@<0pt>@{->}[ld]\ar@<0pt>@{<-}[rd]&
		\cdots &
		y_n \ar@<0pt>@{<-}[rd]&&
	     y_{n+1}\ar@<0pt>@{->}[ld]&&\\
		& x_1\ar@<0pt>@{->}[rr] && x_2\ar@<0pt>@{->}[rr] && x_3 & \cdots & x_n &
	}
\end{equation}
Let $\widetilde{Q}$ denote the superquiver in (\ref{superan}) and consider $\mathcal C_{\mathbb K}(X|Y, \widetilde{Q})$ with $\mathbb K$ a field of characteristic different from 2. Recall that we assume condition (C3), that is, we do not allow a sequence of even mutations containing two mutations in the same direction.

We have the following result, whose statement and first step of the proof are similar to Ovsienko's \cite[Theorem 3]{Ovsienko}. For readers' convenience, we include an example after the theorem.

\begin{thm}
Consider a generic superfrieze of width $n$ such that the even elements $a_{0,i-1}=x_{i}$ ($1\le i\le n$) and odd elements $\beta_{\frac{1}{2}, \frac{1}{2}+i-1}=y_{i}$ ($1\le i\le n+1$) are free variables (which are in boxes in \eqref{fig:superfrieze}) and every other entries in the superfriezes are written as (rational) functions of them. Then the supercommutative superalgebra generated by all the entries of the superfrieze is a subalgebra of the cluster superalgebra $\mathcal C_{\mathbb K}(X|Y, \widetilde{Q})$, where $X=\{x_1,\dots,x_n\}$ and $Y=\{y_1,\dots,y_{n+1}\}$. Moreover for $i=1,\dots,n$, the element $a_{1,i}=\mu_i\cdots\mu_1(x_i)$, so is a cluster variable.
\end{thm}

\begin{proof}
Step 1. We first show that  for $i=1,\dots,n$, the element $a_{1,i}=\mu_i\cdots\mu_1(x_i)$. This step is identical to the proof of  Ovsienko's \cite[Theorem 3]{Ovsienko}. For simplicity of notation, denote 
$x_{i}'=a_{1,i}$ ($1\le i\le n$) and $y_i'=\beta_{\frac{3}{2}, \frac{1}{2}+i}$ ($1\le i\le n+1$). For $k=1,\dots,n$, the quiver $\mu_{k-1}\cdots\mu_1\widetilde{Q}$ is 
\begin{equation*}
\xymatrix{ 
	 y_1\ar@<0pt>@{<-}[rd]&&
	 y_2\ar@<0pt>@{->}[ld]\ar@<0pt>@{<-}[rd]&&
	 y_3\ar@<0pt>@{->}[ld]&
	\cdots &
	 y_k\ar@<0pt>@{<-}[rd]\ar@<0pt>@{->}[ld]&&
	 y_{k+1}\ar@<0pt>@{->}[ld]\ar@<0pt>@{<-}[rd]&
	\cdots\\
	& x_1'\ar@<0pt>@{->}[rr] && x_2' & \cdots & x_{k-1}'\ar@<0pt>@{<-}[rr] && x_k\ar@<0pt>@{->}[rr] && x_{k+1}& \cdots
}
\end{equation*}
So $x_k'=\mu_k\mu_{k-1}\cdots\mu_1(x_k)$ satisfies
\begin{eqnarray}
x_kx_k'&=& 1+x_{k+1}x_{k-1}'+y_{k+1}y_k.
\end{eqnarray}
This coincides with the first equality in \eqref{Rule} applied to the obvious diamond in the superfrieze. Thus $a_{1,k}=x_k'$. 
\smallskip

Step 2. By the proof of \cite[Proposition 2.9.1]{MOT}, all the entries of the superfrieze are polynomials in $a_{i,i}$ and $\beta_{i,i}$ ($0\le i\le n+2$) (we do not need to mention $\beta_{i+\frac{1}{2},i+\frac{1}{2}}$ because it is equal to $\beta_{i,i}$ by \cite[Proposition 2.3.1]{MOT}). So it suffices to show that  $a_{i,i}$ and $\beta_{i,i}$ are in  $\mathcal C_{\mathbb K}(X|Y, \widetilde{Q})$ for all $i$. Moreover, by periodicity property \cite[Lemma 2.6.1]{MOT}, $\beta_{i+n+3,i+n+3}=-\beta_{i,i}$, $a_{i+n+3,i+n+3}=a_{i,i}$, so it suffices to restrict to the range $0\le i\le n+2$.

\smallskip

Step 2a. To show $\beta_{i,i}$ are in $\mathcal C_{\mathbb K}(X|Y, \widetilde{Q})$ for $i=0,\dots,n+2$: 

By \eqref{Rule}, 
$$\beta_{0,i}=x_iy_{i+1}-x_{i+1}y_i\textrm{ for $1\le i\le n-1$, and }\beta_{0,0}=y_1, \beta_{0,n}=x_ny_{n+1}-y_n.$$ 

Then by  \cite[\S1.3]{MOT}, 
$$\beta_{i,i}=\frac{\beta_{0,i}-\beta_{0,i-1}}{x_i}=y_{i-1}+y_{i+1}-\frac{(x_{i-1}+x_{i+1})y_i}{x_i}\textrm{ for $2\le i\le n-1$.}$$
To show that it is in $\mathcal C_{\mathbb K}(X|Y, \widetilde{Q})$, we only need to show that $\frac{(x_{i-1}+x_{i+1})y_i}{x_i}$ is in $\mathcal C_{\mathbb K}(X|Y, \widetilde{Q})$. This is true by observing
$$\mu_i(x_i)y_i=\frac{x_{i-1}+x_{i+1}+y_{i+1}y_i}{x_i}y_i=\frac{(x_{i-1}+x_{i+1})y_i}{x_i}.$$
By a similar reasoning or for trivial reasons, the following elements are also in $\mathcal C_{\mathbb K}(X|Y, \widetilde{Q})$:
$\beta_{0,0}=y_1,\; \beta_{1,1}=y_2-\frac{(x_2+1)y_1}{x_1},\; \beta_{n,n}=y_{n-1}+y_{n+1}-\frac{(1+x_{n-1})y_n}{x_n},\; \beta_{n+1,n+1}=-x_ny_{n+1}+y_n, \beta_{n+2,n+2}=y_n$. 
Thus all  $\beta_{j,j}$ are in $\mathcal C_{\mathbb K}(X|Y, \widetilde{Q})$.

\smallskip

Step 2b.  To show $x_{i,i}$ are in $\mathcal C_{\mathbb K}(X|Y, \widetilde{Q})$ for $i=0,\dots,n+2$: 

For $3\le i\le n-1$, 
$$\aligned
a_{i,i}&=\frac{x_{i-1}+x_{i+1}+\beta_{i,i}\beta_{0,i-1}}{x_i}
=\frac{x_{i-1}+x_{i+1}+\big(y_{i-1}+y_{i+1}-\frac{(x_{i-1}+x_{i+1})y_i}{x_i}\big)(x_{i-1}y_i-x_{i}y_{i-1})}{x_i}\\
&=\frac{x_{i-1}+x_{i+1}+x_{i-1}y_{i-1}y_i+ y_{i+1}(x_{i-1}y_i-x_{i}y_{i-1}) + (x_{i-1}+x_{i+1})y_iy_{i-1} }{x_i}\\
&=\frac{x_{i-1}+x_{i+1}-x_{i-1}y_iy_{i-1}+ x_{i-1}y_{i+1}y_i + x_{i-1}y_iy_{i-1}+x_{i+1}y_iy_{i-1} }{x_i}-y_{i+1}y_{i-1}\\
&=\frac{x_{i-1}+x_{i+1}+ x_{i-1}y_{i+1}y_i + x_{i+1}y_iy_{i-1} }{x_i}-y_{i+1}y_{i-1}\\
\endaligned
$$
On the other hand,
$$\aligned
x_{i-1}x'_i&=x_{i-1}\frac{1+x_{i+1}(\frac{1+x_ix'_{i-2}+y_iy_{i-1}}{x_{i-1}}) + y_{i+1}y_i}{x_i}
=\frac{x_{i-1}+x_{i+1} (1+x_ix'_{i-2}+y_iy_{i-1} ) +x_{i-1}y_{i+1}y_i}{x_i}\\
&=\frac{x_{i-1}+x_{i+1} + x_{i+1}y_iy_{i-1} +x_{i-1}y_{i+1}y_i}{x_i}+x_{i+1}x'_{i-2}\\
\endaligned
$$
So we see that
$$a_{i,i}=x_{i-1}x'_i- x_{i+1}x'_{i-2}-y_{i+1}y_{i-1}$$
which is in $\mathcal C_{\mathbb K}(X|Y, \widetilde{Q})$. For $i=2$, and $i=n$, a similar proof works. For $i=0,1,n+1,n+2$, we get cluster variables which are obviously in $\mathcal C_{\mathbb K}(X|Y, \widetilde{Q})$. This completes the proof.
\end{proof}
\begin{example}
We use the width-2 superfrieze (shown in \cite{MOT}) to illustrate the idea of proof, see Table \ref{tab:shear}. For the ease of reading, we have applied a horizontal shearing to the tables. 
\begin{table}[h]
\begin{center}
\small\addtolength{\tabcolsep}{-5pt}
$$
\begin{array}{cccccccccccccccccccc|ccccc}
a_{0,-1}&&&&a_{10}&&&&a_{21}&&&&a_{32}&&&&a_{43}&&&&a_{54}\\
\beta_{00}&&\boxed{\beta_{\frac{1}{2}\frac{1}{2}}}&&\beta_{11}&&\beta_{\frac{3}{2}\frac{3}{2}}&&\beta_{22}&&\beta_{\frac{5}{2}\frac{5}{2}}&&\beta_{33}&&\beta_{\frac{7}{2}\frac{7}{2}}&&\beta_{44}&&\beta_{\frac{9}{2}\frac{9}{2}}&&\beta_{55}\\  

\boxed{a_{00}}&&&&a_{11}&&&&a_{22}&&&&a_{33}&&&&a_{44}&&&&a_{55}\\
\beta_{01}&&\boxed{\beta_{\frac{1}{2}\frac{3}{2}}}&&\beta_{12}&&\beta_{\frac{3}{2}\frac{5}{2}}&&\beta_{23}&&\beta_{\frac{5}{2}\frac{7}{2}}&&\beta_{34}&&\beta_{\frac{7}{2}\frac{9}{2}}&&\beta_{45}&&\beta_{\frac{9}{2}\frac{11}{2}}&&\beta_{56}\\  

\boxed{a_{01}}&&&&a_{12}&&&&a_{23}&&&&a_{34}&&&&a_{45}&&&&a_{56}\\
\beta_{02}&&\boxed{\beta_{\frac{1}{2}\frac{5}{2}}}&&\beta_{13}&&\beta_{\frac{3}{2}\frac{7}{2}}&&\beta_{24}&&\beta_{\frac{5}{2}\frac{9}{2}}&&\beta_{35}&&\beta_{\frac{7}{2}\frac{11}{2}}&&\beta_{46}&&\beta_{\frac{9}{2}\frac{13}{2}}&&\beta_{57}\\  

a_{0,2}&&&&a_{13}&&&&a_{24}&&&&a_{35}&&&&a_{46}&&&&a_{57}\\

\\
\hline\\
\\
1&&&&1&&&&1&&&&1&&&&1&&&&1\\
y_1&&\boxed{y_1}&&y_4&&y_4&&y_5&&y_5&&y_6&&y_6&&y_3&&y_3&&-y_1\\  
\boxed{x_1}&&&&x_3&&&&x_5&&&&x_2&&&&x_4&&&&x_1\\
y_7&&\boxed{y_2}&&y_8&&y_9&&y_{10}&&-y_7&&y_2&&-y_8&&y_9&&-y_{10}&&-y_7\\  
\boxed{x_2}&&&&x_4&&&&x_1&&&&x_3&&&&x_5&&&&x_2\\
-y_6&&\boxed{y_3}&&-y_3&&-y_1&&y_{1}&&-y_4&&y_4&&-y_5&&y_5&&-y_6&&y_6\\  
1&&&&1&&&&1&&&&1&&&&1&&&&1\\
\end{array}
$$
\end{center}
\caption{Superfrieze of width 2. Top table shows the indices; bottom table shows the explicit entries.} \label{tab:shear}
\end{table}

In the table, $x_1,x_2,y_1,y_2,y_3$ are initial variables. The other entries are determined as follows (same as \cite[\S2.4]{MOT} with a couple of typos corrected).

The even variables:
$$x_3=\frac{1+x_2+y_2y_1}{x_1},\;  x_4=\frac{1+x_1+x_2+y_2y_1+x_1y_3y_2}{x_1x_2}, \; x_5=\frac{1+x_1+y_2y_1+x_1y_3y_2+x_2y_1y_3}{x_2}$$

The odd entries:
$$
\aligned
&y_4=y_2-\frac{(1+x_2)y_1}{x_1},\;
y_5=y_1+y_3-\frac{(1+x_1)y_2}{x_2},\;
y_6=y_2-x_2y_3,\;
y_7=x_1y_2-x_2y_1,\\
&y_8=\frac{x_2(1+x_2+y_2y_1)y_3-(1+x_1+x_2)y_2}{x_1x_2},\;
y_9=y_3+\frac{-(1+x_1+x_2)y_1+x_1y_1y_2y_3}{x_1x_2},\\
&y_{10}=y_1-x_1y_3
\endaligned
$$
Now we show all entries are in $\mathcal C_{\mathbb K}(X|Y, \widetilde{Q})$: 

$x_1,x_2,y_1,y_2,y_3,y_6,y_7$ are obviously in $\mathcal C_{\mathbb K}(X|Y, \widetilde{Q})$;

since $\mu_1(x_1)=\frac{1+x_2+y_2y_1}{x_1}$, we have $\mu_1(x_1)y_1=\frac{(1+x_2)y_1}{x_1}$, thus $y_4=y_2-\mu_1(x_1)y_1\in \mathcal C_{\mathbb K}(X|Y, \widetilde{Q})$;

similarly, $\mu_2(x_2)=\frac{1+x_1+y_3y_2}{x_2}$, thus
$y_5=y_1+y_3-\mu_2(x_2)y_2\in \mathcal C_{\mathbb K}(X|Y, \widetilde{Q})$;

$x_3=\mu_1(x_1), x_4=\mu_2\mu_1(x_2)\in \mathcal C_{\mathbb K}(X|Y, \widetilde{Q})$;

$x_5=\frac{x_1+1+y_5(x_1y_2-x_2y_1)}{x_2}=x_1x_4+y_1y_3-1\in \mathcal C_{\mathbb K}(X|Y, \widetilde{Q})$;

This shows that the second and third rows of Table \ref{tab:shear} are in $C_{\mathbb K}(X|Y, \widetilde{Q})$, therefore all entries are in $C_{\mathbb K}(X|Y, \widetilde{Q})$ because they can be written as polynomials of the entries in the second and third rows.
\end{example}

\begin{rmk}
A limitation of our definition of cluster superalgebra is that, since we assume (C3), we cannot obtain all nontrivial even entries in the superfrieze as cluster variables. It will be very desirable to extend our definition of cluster superalgebra to satisfy this property.
\end{rmk}

\noindent We close the paper with the remark that in \cite{PZ1} and \cite{PZ2} the notions of even and odd Ptolemy relations have been proposed for super Teichm\"{u}ller spaces. It would be useful to see how do our even and odd mutations compare with their even and odd Ptolemy relations. Also, in a recent preprint \cite{Ted}, super Pl\"{u}cker embedding has been defined for a more general class of super Grassmannians. It would be interesting to see if one can obtain examples of coordinate superalgebras of some other super Grassmannians apart from $G(2|0; 4|1)$ admitting cluster superalgebra structure.  

\bigskip

\noindent {\bf Acknowledgement.} The authors would like to thank Prof. Vera Serganova for her useful comments. We also thank the referee for providing many valuable suggestions which we believe have significantly improved the quality and readability of the paper.

\bigskip

\bigskip

\bigskip

\bigskip

\bigskip
\end{document}